\newtheorem{theor}{\hspace{1cm}{\sc Theorem}}[section]
\newtheorem{utver}[theor]{\hspace{1cm}{\sc Proposition}}
\newtheorem{predpol}[theor]{\hspace{1cm}{\sc Assumption}}
\newtheorem{sledst}[theor]{\hspace{1cm}{\sc Corollary}}
\newtheorem{lemma}[theor]{\hspace{1cm}{\sc Lemma}}
\newtheorem{conj}[theor]{\hspace{1cm}{\sc Conjecture}}
\newtheorem*{utver*}{\hspace{1cm}{\sc Proposition}}
\theoremstyle{definition}
\newtheorem{defin}[theor]{\hspace{1cm}{\sc Definition}}
\newtheorem{exa}[theor]{\hspace{1cm}{\sc Example}}
\newtheorem{rem}[theor]{\hspace{1cm}{\sc Remark }}
\newtheorem{prb}[theor]{\hspace{1cm}{\sc Problem}}
\newcommand{\ind}{\mathop{\rm ind}\nolimits}
\newcommand{\vol}{\mathop{\rm Vol}\nolimits}
\newcommand{\area}{\mathop{\rm Area}\nolimits}
\newcommand{\length}{\mathop{\rm Length}\nolimits}
\newcommand{\conv}{\mathop{\rm conv}\nolimits}
\newcommand{\supp}{\mathop{\rm supp}\nolimits}
\newcommand*\circled[1]{\tikz[baseline=(char.base)]{
\node[shape=circle,draw,inner sep=2pt] (char) {#1};}}
\newcommand{\bigslant}[2]{{\raisebox{.2em}{$#1$}\left/\raisebox{-.2em}{$#2$}\right.}}
\def\R{\mathbb R}
\def\N{\mathbb N}
\def\Z{\mathbb Z}
\def\C{\mathbb C}
\def\CC{({\mathbb C}\setminus 0)}
\begin{document}
\title{The Newton Polytope of the Morse Discriminant of a Univariate Polynomial}
\author{Arina Voorhaar\thanks{{\it National Research University Higher School of Economics}, Moscow, Russia}\thanks{{\it University of Geneva}, Geneva, Switzerland}}
\date{}
\maketitle{}
\begin{abstract}
In this paper we compute the Newton polytope $\mathcal M_A$ of the Morse discriminant in the space of univariate polynomials with the given support set $A.$ Namely, we establish a surjection between the set of all combinatorial types of Morse univariate tropical polynomials and the vertices of $\mathcal M_A.$
\end{abstract}
\tableofcontents
\section{Introduction}

For an arbitrary support set $A\subset\Z^n,$ the Newton polytope of the  $A$--discriminant $D_A$ (i.e. the closure of the set of all non-smooth hypersurfaces given by polynomials with the support set $A$) was described by Gelfand, Kapranov and Zelevinsky in \cite{GKZ}. Namely, they established a surjection from the set of regular triangulations of the polytope $\conv(A)$ with vertices in $A$ (or, equivalently, the set of combinatorial types of smooth tropical hypersurfaces with support $A$) to the vertices of the Newton polytope of $D_A.$ 

In this paper, we obtain a similar result for the Newton polytope of the Morse discriminant in the space $\C^A$ of univariate polynomials with the given support $A\subset\Z.$  In other words, we are interested in the following codimension $1$ strata in $\C^A.$ 

\begin{defin}
The {\it caustic} in the space of polynomials with the given support $A$ is the set of all Laurent polynomials $f\in\C^A$ such that the map $f\colon\CC^n\to\C$ has a degenerate critical point. 
\end{defin}

\begin{defin}
The {\it Maxwell stratum} in the space of polynomials with the given support $A$ is the set of all Laurent polynomials $f\in\C^A$ such that the map $f\colon\CC^n\to\C$ has a pair of coinciding critical values taken at distinct points.
\end{defin}

\begin{defin}
A polynomial $f\in\C^A$ is called {\it Morse,} if it belongs neither to the caustic, nor to the Maxwell stratum.
\end{defin}

\begin{defin}
The {\it Morse discriminant} is the closure of the set of all non-Morse polynomials $f\in\C^A.$ It is given by the polynomial $h_m^2h_c,$ where $h_m$ and $h_c$ are polynomials defining the Maxwell stratum and the caustic, respectively, if these two sets are hypersurfaces. Otherwise we set the corresponding defining polynomial to $1$. 
\end{defin}

\begin{rem}
There are various ways to define the Morse discriminant, and our computational method can be adjusted to suit any of them. See Remark \ref{maxwell_supp} for details.
\end{rem}

The goal of this paper is to describe the Newton polytope $\mathcal M_{A}$ of the Morse discriminant, provided that the set $A$ satisfies the following property. 

\begin{predpol}\label{gen_sing}
For a generic polynomial $f$ in the Morse discriminant, the map $f\colon\CC\to\C$ has exactly one pair of coinciding critical values or exactly one degenerate critical point (of multiplicity 2).
\end{predpol}

\begin{conj}\label{sing_conj}
Any set $A$ with $\length(\conv (A))\geqslant 3,$ which affinely generates the lattice $\Z,$ satisfies Assumption \ref{gen_sing}.
\end{conj}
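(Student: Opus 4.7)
The natural strategy is dimension counting on incidence varieties over $\C^A$. Introduce the Maxwell incidence variety
\[
V_{\mathrm M}=\bigl\{(f,z_1,z_2)\in \C^A\times\CC^2 : z_1\ne z_2,\ f'(z_1)=f'(z_2)=0,\ f(z_1)=f(z_2)\bigr\}
\]
and the caustic incidence variety
\[
V_{\mathrm C}=\bigl\{(f,w)\in\C^A\times\CC : f'(w)=f''(w)=0\bigr\}.
\]
For fixed $(z_1,z_2)$ (respectively $w$), the fiber in $\C^A$ is cut out by $3$ (respectively $2$) linear functionals on the coefficient space. The first step is to show that under the hypotheses on $A$ these functionals are linearly independent for generic values of the parameters; granting this, $V_{\mathrm M}$ and $V_{\mathrm C}$ are irreducible of pure codimension $1$ in the ambient product, and their projections onto $\C^A$ are dominant onto the Maxwell stratum and the caustic, each of which is therefore a hypersurface.

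The second step is to control the generic fibers of these projections. For generic $f$ in the image of $V_{\mathrm M}$, the fiber is the set of unordered pairs of distinct critical points sharing a critical value, and it should consist of a single pair; similarly a generic $f$ in the image of $V_{\mathrm C}$ should have a unique degenerate critical point, of multiplicity exactly $2$. Both statements reduce to showing that the ``more degenerate'' incidence varieties---parametrising two independent pairs of coinciding critical values, triple coincidences of critical values, two degenerate critical points, a single critical point of multiplicity $\geqslant 3$, or a pair of coinciding critical values together with a further degenerate critical point---all have codimension at least $2$ in $\C^A$. Each such configuration is handled by the same type of incidence-variety count, in which the required codimension bound again reduces to a linear-independence statement of the same flavor.

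The hard part will thus be a uniform linear-independence statement: for generic values of the auxiliary parameters, row vectors of the form $(a z^{a-1})_{a\in A}$, $(a(a-1)z^{a-2})_{a\in A}$ and $(z_1^a-z_2^a)_{a\in A}$ should span a subspace of $\C^A$ of the maximal possible dimension. This is precisely where both hypotheses on $A$ are used. The assumption $\length(\conv(A))\geqslant 3$ ensures that enough critical points exist in $\CC$ for the configurations above to be non-vacuous. Affine generation of $\Z$ by $A$ rules out the pathological situation $f(z)=g(z^d)$ with $d\geqslant 2$, in which the $\Z/d$-action $z\mapsto\zeta z$ with $\zeta^d=1$ forces automatic coincidences of critical values so that the Maxwell stratum would contain all of $\C^A$ and the ``one pair of coinciding values'' assertion would fail. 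I expect the remaining cases to be handled either by exhibiting explicit evaluation points making a suitable minor nonzero---thereby verifying linear independence as a polynomial identity in the parameters---or by reducing to a classical statement about Vandermonde-type determinants built from the exponent set $A$.
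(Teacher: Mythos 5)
This statement is labeled \emph{Conjecture} in the paper, and the paper deliberately does \emph{not} prove it; immediately after the statement the author only records that certain specific families of $A$ (those with $A=\conv(A)\cap\Z$, or those containing four consecutive integers) are known to satisfy Assumption~\ref{gen_sing}. So there is no proof on the paper's side to compare against, and any purported proof must be judged entirely on its own merits.

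Your proposal correctly identifies the natural strategy: set up incidence varieties for the Maxwell and caustic loci, show each is irreducible of codimension one by fiber dimension, and then show all ``more degenerate'' strata (two independent coincidences, a triple coincidence, two degenerate critical points, a critical point of multiplicity $\geqslant 3$, or a coincidence plus a degeneration) have codimension at least two. This is almost certainly the right framework, and the reduction to a linear-independence statement for vectors of the form $\bigl(a z^{a-1}\bigr)_{a\in A}$, $\bigl(a(a-1)z^{a-2}\bigr)_{a\in A}$, $\bigl(z_1^a-z_2^a\bigr)_{a\in A}$ is also the right reduction.

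However, the proposal stops precisely at the point where the difficulty begins: you write that you ``expect'' the independence to follow by ``exhibiting explicit evaluation points'' or ``reducing to a classical statement about Vandermonde-type determinants,'' but no such argument is given. For the full exponent range $A=[a_0,a_{|A|-1}]\cap\Z$ the relevant matrices are Vandermonde-like and the independence is routine, which is exactly why the paper can assert the result in that case without proof. For an arbitrary $A$ with gaps, the matrices formed from $(a)_{a\in A}$, $(a(a-1))_{a\in A}$, and the mixed rows at two variable points $z_1,z_2$ have no classical determinant evaluation, and whether the degenerate strata really have codimension $\geqslant 2$ is the entire content of the conjecture. Your one concrete observation---that affine generation excludes $f(x)=g(x^d)$---rules out the grossest symmetry, but it does not rule out more local arithmetic coincidences among the differences of exponents that could force some of the ``extra'' equations to become dependent over a positive-dimensional family of $(z_1,z_2)$. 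In short, you have reproduced the conjecture's reduction but not its resolution; the step you flag as the ``hard part'' is genuinely the open problem, and as written this is a plan, not a proof.
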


\begin{rem}
Even though the statement above is a conjecture, there is a wide range of sets $A$ satisfying Assumption \ref{gen_sing}. For instance, any set $A\subset\Z$ such that  ${A=\conv(A)\cap\Z}$ and $\length(\conv (A))\geqslant 3,$ as well as any set $A\subset Z$ containing $4$ consecutive integers, satisfies Assumption \ref{gen_sing}. 
\end{rem}

The problem of describing the Newton polytope and other closely related invariants of the Morse discriminant was studied by various authors. The degree of the Morse discriminant for general degree $d$ univariate polynomials was computed in \cite{LZ}. The tropical fan of the variety of univariate degree $d$ polynomials having two multiple roots was studied in \cite{DHT} and in \cite{E3} (in a more general setting). The maximal cones of the tropical fan, that were computed in these works, under the projection along a line spanned by a constant monomial, define the directions of all the edges of the Morse polytope. However, due to non-trivial intersections of the images of the cones under this projection, the results obtained in \cite{DHT} and in \cite{E3} cannot be directly used to enumerate the edges and vertices of the Morse polytope.

The results obtained in this paper motivate the following definitions of the tropical versions of the Maxwell stratum and caustic in the space of univariate tropical polynomials with the given support $A.$

\begin{defin}\label{trop_Maxwell}
We say that a tropical Laurent polynomial $F(X)$ belongs to the {\it tropical Maxwell stratum} in the space of tropical polynomials with the given support $A,$ if there exists a pair $r_1,r_2$ of tropical roots of $F(X),$ such that $F(r_1)=F(r_2).$ 
\end{defin}

\begin{defin}\label{trop_caustic}
A tropical Laurent polynomial $F(X)$ belongs to the {\it tropical caustic} in the space of tropical polynomials with the given support $A,$ if for some tropical root $r$ of $F(X),$ there are at least two pairs of monomials attaining the same values at $r.$ 
\end{defin}

\begin{defin}\label{trop_Morse_defin}
A tropical Laurent polynomial $F(X)$ is called {\it Morse} if it belongs neither to the Maxwell stratum, nor to the caustic.
\end{defin}

It is natural to try to interpret Definitions \ref{trop_Maxwell} and \ref{trop_caustic} in the language of tropical covers developed by numerous authors (see e.g. \cite{BBM},\cite{CJM}) for a tropical approach to computing Hurwitz numbers. For a detailed exposition of tropical covers and their applications we refer the reader to \cite{B}. 

Let $F(X)$ be a tropical Laurent polynomial in one variable. Consider the projection of the graph of $F(X)$ onto the vertical axis. This projection gives rise to a map  $\rho\colon\Gamma\to\Gamma'$ of abstract tropical curves (graphs), which is a tropical cover. Then the polynomial $F(X)$ belongs to the tropical Maxwell stratum, if and only if, in the notation of \cite{B}, for some internal vertex $v'\in\Gamma',$ the map $\rho$ is not simply ramified over $v'.$ It would be interesting to interpret the definition of the tropical caustic in the same terms, but this would probably require some more refined definitions of an abstract tropical curve and a tropical cover.

All the combinatorial types of tropical Morse polynomials can be encoded by certain combinatorial data described in Subsection \ref{cd}. As the main result of this paper, we establish a surjection between the set of all possible combinatorial types of tropical Morse polynomials with given support $A$ and the vertices of the Newton polytope $\mathcal M_{A}$ of the Morse discriminant in $\C^A.$ We denote by $\mu_{A}$ the support function of the polytope $\mathcal M_A.$ It is given by formula (\ref{main_formula}) in Theorem \ref{support}. Then the abovementioned surjection is defined as follows: first we consider the tuple of coefficients of the given Morse tropical Laurent polynomial $F(X)$ as a covector $\gamma\in(\R^{|A|})^*,$ and then we compute the differential $d\mu_{A}$ at $\gamma.$ Thus we obtain the description of the sought Newton polytope $\mathcal M_{A}$ as the convex hull of the points in the image of the surjection described above.

The paper is organized as follows. In Section \ref{prelim}, we give an overview of the results obtained in \cite{V} that will be used to compute the function $\mu_A$ in Section \ref{the_proof}. The detailed description of the main result together with examples of its usage is given Section \ref{answer}. For the convenience of the reader, Section \ref{answer} is self-contained and can be read separately from the other sections. Finally, Section \ref{the_proof} is devoted to the proof of the main result of this paper and is split into 6 subsections according to the 6 major steps of the proof. \\

{\bf Acknowledgements.} I would like to thank my supervisor Alexander Esterov for exceptional support, guidance and fruitful discussions. The research was supported by the NCCR SwissMAP of the Swiss National Science Foundation.\\

\section{Preliminaries}\label{prelim}
In this Section we recall the definition of a forking-path singularity and state one of the key results that will be used in Section \ref{the_proof}. See \cite{V} for more details and examples.
\subsection{Forking Paths Singularities}\label{intro_fps}
This subsection is devoted to the so-called {\it forking paths singularities}, introduced in the work \cite{E3}. 

Let $i=(i_1,i_2\ldots)$ be a sequence of integers satisfying the following properties: 
\begin{itemize}
	\item the sequence $i$ stabilizes at $1;$
	\item for every $r\in\N,$ the number $i_{r+1}$ divides $i_r.$ 
\end{itemize}
Given such a sequence $i,$ one can construct another sequence $q=(q_1,q_2\ldots)$ as follows: for every $r\in\N,$ set $q_{r}=\dfrac{i_r}{i_{r+1}}.$

This data can be encoded using a certain system of subsets of a finite set $R$ of $i_1$ elements via the so-called {\it $i$-nested boxes construction}. This construction works level by level as follows. Level $0$ consists of one box -- the set $R$ itself. To construct level $1,$ we divide the elements in $R$ into $q_1$ boxes containing $i_2$ elements each. Level $2$ is then the result of dividing the elements of each of the level $1$ boxes into $q_2$ boxes containing $i_3$ elements each. We continue this operation until we end up with $i_1$ boxes, each containing an element of $R$. The latter will happen in a finite number of steps, since the sequence $i$ stabilizes at $1.$ 

Each of the elements of $R$ in the nested-boxes construction has its own {\it address}, i.e. a finite sequence of integers, constructed as follows. The $(k+1)-$th element of the address is the number of the $k-$th level box containing the given element. For any two elements $r_1,r_2$ of $R$ with addresses $(a_1,a_2,\ldots,a_N)$ and $(b_1,b_2,\ldots,b_N)$ one can define the {\it depth} of their relation as the number $\kappa(r_1,r_2)$ equal to the minimal number $K$ such that $a_K\neq b_K.$

\begin{defin}\label{def_fps}
	In the same notation as above, let $i=(i_1,i_2,\ldots)$ be an integer sequence stabilizing at $1$ and such that $i_{r+1}$ divides $i_r$ for every $r.$ With the $i-$nested boxes construction one can associate a plane singularity with $i_1$ distinct regular branches $\varphi_{r_m}\colon(\C,0)\to(\C^2,0)$ indexed by the elements of the set $R$ and such that the intersection number of $\varphi_{r_m}$ and $\varphi_{r_n}$ with $i\neq j$ is equal to $\kappa(r_m,r_n).$ We call this singularity an {\it $i-$forking paths singularity.}
\end{defin}

\begin{utver}\label{chifps}
	The Euler characteristic $\chi(i)$ of the Milnor fiber of an $i-$forking paths singularity can be computed using the following formula:  
	\begin{equation}
	\chi(i)=i_1-i_1\sum\limits_{n=1}^{\infty}(i_n-1).
	\end{equation}
\end{utver}

\subsection{Singular points of a plane projection of a 1--dimensional complete intersection}\label{proj_sing}
This subsection concerns the singular points of a plane projection of a generic $1-$dimensional complete intersection given by a system of equations with the given support set $\tilde{A}\subset \Z^3.$ For more general results, details and proofs, we refer the reader to \cite{V}. 

Choose a coordinate system $(x,y,t)$ in $\CC^3$ and the corresponding basis $\{e_1,e_2,e_3\}$ in the character lattice $L\simeq\Z^3.$ Let $\tilde{A}\subset\Z\langle e_1,e_2,e_3\rangle$ be a finite set of full dimension and $\Delta\subset\R^3$ be its convex hull. By $\ind_v(\tilde{A})$ we denote the index of the sublattice in $\Z$ generated by the image of $\tilde{A}$ under the projection $\rho\colon\Z^{3}\twoheadrightarrow\bigslant{Z^{3}}{\langle e_{2},e_{3}\rangle}.$ 

The main statement of this subsection uses the following two assumptions. 

\begin{predpol}\label{indall}
The set $\tilde{A}$ contains $0\in\Z^3$ and $\ind_v(\tilde{A})=1.$
\end{predpol}

\begin{predpol}\label{primitive0}
	For every facet of $\Delta$ the image of its primitive normal covector under the projection forgetting the first coordinate is also primitive.
\end{predpol}

Let $Q=\rho(\Delta)$ be the image of the polytope $\Delta$ under the projection $\rho\colon\R^{3}\twoheadrightarrow\bigslant{\R^{3}}{\langle e_2,e_3\rangle}.$ 

\begin{defin}
	We call a face $\tilde{\Gamma}\subset\Delta$ {\it horizontal}, if its projection is contained in the boundary of $Q$. 
\end{defin}

Let $\Gamma\subset\Delta$ be a non-horizontal facet contained in a hyperplane given by a linear equation of the form $h(e_1,e_2,e_3)=\lambda.$ The function $h$ is unique up to a scalar multiple, therefore, one can assume that the coefficients of $h$ are coprime integers and that for any point $\alpha\in A\setminus\Gamma,$ we have $h(\alpha)<\lambda.$ 

We now construct a sequence of integers $i^{\Gamma}=(i_1^{\Gamma},i_2^{\Gamma},\ldots)$ as follows. 

Set $B_1^{\Gamma}=\tilde A\cap\Gamma.$ For every $r>1,$ we define  $$B_r^{\Gamma}=B_{r-1}^{\Gamma}\cup(\tilde A\cap\{h(e_1,e_2,e_3)=\lambda-(r-1)\}).$$ 
Finally, for every $r\geqslant 1,$ we set $$i_r^{\Gamma}=\ind_{v}(B_r^{\Gamma}).$$

It is clear that for every $r,$ the element $i_r^{\Gamma}$ divides $i_{r-1}^{\Gamma}.$ Moreover, since for the set $\tilde A$ we have $\ind_v(A)=1,$ any such sequence stabilizes to $1$.

Consider a complete intersection $\mathcal C=\{f_1=f_2=0\}\in \CC^3$ with $\supp(f_1)=\supp(f_2)=\tilde{A},$ and the projection $\pi\colon\CC^3\to\CC^2$ forgetting the first coordinate. For generic $f_1,f_2\in\C^{\tilde A},$ the closure of the image $\pi\tilde{\mathcal C}\subset\CC^2$ is an algebraic curve $D$ with finitely many isolated singular points. Its Newton polygon is equal to the fiber polytope $P=\int_{\pi}\Delta$ (see \cite{EK}). 

The curve $D$ is not in general Newton non-degenerate with respect to its Newton polygon. In other words, the closure of $D$ in the toric surface $X_P$ might intersect its $1-$dimensional orbits not transversally. One of the clue results of the paper \cite{V} is the description of the singularities of the curve $D$ at infinity for sufficiently generic tuples $f_1,f_2\in\C^{\tilde{A}}.$

\begin{lemma} \label{fpsinfinity}
Under Assumptions \ref{indall} and \ref{primitive0}, for a generic tuple of polynomials $f_1,f_2\in\C^{\tilde A},$ all the singularities of the curve $D$ at infinity are $i^{\Gamma}-$forking paths singularities for non-horizontal facets $\Gamma\subset\Delta.$ 
\end{lemma}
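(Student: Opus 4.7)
The plan is to analyze the singularities of $D$ at infinity one torus orbit of $X_P$ at a time, via a Newton--Puiseux expansion organized by the successive slices $\tilde A\cap\{h=\lambda-(r-1)\}$. By the standard bijection between edges of a fiber polytope and the $1$-dimensional projection-coherent pieces of the base, the edges of $P=\int_\pi\Delta$ are in bijection with non-horizontal facets of $\Delta$, so it suffices to fix one such facet $\Gamma\subset\{h=\lambda\}$ and work in a monomial chart of $X_P$ adapted to the corresponding edge of $P$. Assumption \ref{primitive0} ensures that the chart may be chosen so that the orbit $O_\Gamma$ corresponding to $\Gamma$ is cut out by $\{v=0\}$ in primitive coordinates $(u,v)$, and that the defining polynomial of $\overline D$ in $(u,v)$ has its $v=0$ edge coming without multiplicity from the truncation $\tilde A\cap\Gamma$.

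The next step is to parameterize the branches of $D$ through a generic point of $O_\Gamma$. Such branches come from components of $\mathcal C\subset\CC^3$ escaping to infinity in the direction of the outer normal of $\Gamma$, and hence admit Puiseux expansions of the first coordinate $x$ in terms of the fiber coordinate; matching leading orders reduces the leading problem to the truncated system $f_1|_\Gamma=f_2|_\Gamma=0$ on the sub-torus generated by $\Gamma$. By Assumption \ref{indall} together with the Bernstein--Kouchnirenko theorem, a generic such system has exactly $i_1^\Gamma$ solutions, yielding a set $R$ of $i_1^\Gamma$ branches of $\mathcal C$ lying over the chosen point. Successively enlarging the truncation by the monomials in $\tilde A\cap\{h=\lambda-(r-1)\}$ for $r=2,3,\ldots$ modifies the Puiseux ansatz at the corresponding order of $v$; the root pattern shows that each existing cluster of $i_r^\Gamma$ branches is resolved into $q_r=i_r^\Gamma/i_{r+1}^\Gamma$ subclusters of $i_{r+1}^\Gamma$ branches. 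This is precisely the level-$r$ refinement of the $i^\Gamma$-nested boxes construction of Subsection \ref{intro_fps}, and thus equips $R$ with the required nested-boxes structure.

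Once this combinatorial structure is in place, the final step is to match the intersection numbers. Two branches $\varphi_{r_m},\varphi_{r_n}$ whose addresses first diverge at level $\kappa(r_m,r_n)$ have Puiseux expansions agreeing to order $\kappa(r_m,r_n)-1$ and separating at order $\kappa(r_m,r_n)$ in the fiber parameter, so their local intersection multiplicity in the $(u,v)$-chart is exactly $\kappa(r_m,r_n)$. This is the defining property of an $i^\Gamma$-forking paths singularity in Definition \ref{def_fps}, which, together with genericity of $(f_1,f_2)$, identifies the singularity of $D$ at the chosen point of $O_\Gamma$ with an $i^\Gamma$-forking paths singularity as claimed.

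The principal obstacle is certifying that the cluster splittings occur at the \emph{exactly} predicted depths and nowhere else. Concretely one must show (i) that for generic $(f_1,f_2)$ the perturbation coming from each stratum $\tilde A\cap\{h=\lambda-(r-1)\}$ acts non-degenerately on each existing cluster of $i_r^\Gamma$ branches and resolves it into precisely $q_r$ subclusters, and (ii) that no earlier stratum accidentally glues branches that the combinatorics predicts to have already separated. Both reduce to a stratum-by-stratum genericity argument combined with careful bookkeeping of how $\ind_v(B_r^\Gamma)$ enters the valuations of successive Puiseux coefficients; the fact that $i^\Gamma$ stabilizes at $1$ (which follows from $\ind_v(\tilde A)=1$ in Assumption \ref{indall}) guarantees that this recursive process terminates with all $i_1^\Gamma$ branches fully separated, as required by Definition \ref{def_fps}.
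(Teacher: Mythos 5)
Your Puiseux--cluster picture is the right conceptual framework: reduce to one non-horizontal facet $\Gamma$ at a time via the fiber-polytope/edge correspondence, use Assumption \ref{primitive0} to work in a primitive monomial chart, count the $i_1^\Gamma$ branches of $\mathcal C$ over a generic point of the orbit via Bernstein--Kouchnirenko applied to the truncated system on $\Gamma$, and then track how the slices $\tilde A\cap\{h=\lambda-(r-1)\}$ refine the clusters. That matches the intended intuition, and the one-sided inequality (order of contact $\geqslant\kappa(r_m,r_n)$) is indeed essentially free from the Puiseux ansatz.

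The issue is the step you yourself flag at the end. Certifying that the contact orders are \emph{exactly} the combinatorial depths $\kappa$, and not larger, is not a bookkeeping detail that falls out of ``stratum-by-stratum genericity''; it is the entire content of the lemma, and your proposal leaves it as an IOU. Note that this paper does not close that gap either: immediately after stating Lemma \ref{fpsinfinity} it says there is no elegant way to formulate the required genericity condition for general $\tilde A$ and refers to Lemma 4.13 of \cite{V} for the general statement, while in this paper it only verifies the special case it actually uses, in Example \ref{fpsgen}. There the argument is concrete: after a monomial change of variables putting $\Gamma$ in a standard form, the $K=1$ case is settled by showing that the determinant built from the last two Jacobian minors at the two branch points is nonzero (giving transversal projected tangent lines), and the case $K>1$ is reduced to $K=1$ by the substitution $\check t=t^K$ together with an upper semi-continuity argument in the auxiliary coefficients $c_i$, $i<K$. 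Your proposal reproduces neither of these mechanisms, so as written it establishes the lower bound on the contact order but not the matching upper bound that makes the singularity exactly an $i^\Gamma$-forking-paths singularity. To repair it, you would need either to import the Jacobian-minor/semi-continuity computation from Example \ref{fpsgen} (for the particular $\tilde A$ arising in this paper) or to invoke Lemma 4.13 of \cite{V} directly for the general case.
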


There seems to be no elegant way to formulate the necessary genericity conditions required in Lemma \ref{fpsinfinity} for general $\tilde{A}.$ However, in this paper, Lemma \ref{fpsinfinity} will be applied to some particular case that we will cover in Example \ref{fpsgen} below. For more details about the general case, see Lemma 4.13 in \cite{V}.  

\begin{exa}\label{fpsgen}
Consider a pair of polynomials $f(x,y,t)=1+x^l+y^k+\sum\limits_{i>0} c_it^i x^{m_i}y^{n_i}$ and $g(x,y,t)=a+bx^l+\sum\limits_{i>0} \alpha_ic_it^i x^{m_i}y^{n_i},$ where $l>1,~k>0,$ the coefficients $a,b,\alpha_i$ are some fixed nonzero and pairwise distinct numbers. Some of the coefficients $c_i$ are $0$, the other numbers $c_i$ are chosen generically. Suppose that the sets $\tilde A=\supp(f),~\Delta=\conv(\tilde A)$ are of maximal dimension and satisfy Assumption \ref{indall}. Let $(x_1,y_0)$ and $(x_2,y_0)$ be a pair of roots for the truncated system $\{1+x^l+y^k=a+bx^l=0\}.$ Consider the branches of the closure of $\mathcal C=\{f(x,y,t)=g(x,y,t)=0\}$ passing through the points $p_1=(x_1,y_0,0)$ and $p_2=(x_2,y_0,0).$ These branches intersect the plane $\{t=0\}$ transversally. We need to compute the order of contact at $p=(y_0,0)$ between their images under the projection $\pi$ forgetting the first coordinate. Let $K(p_1,p_2)$ be the smallest index $i$ such that $x_1^{m_i}y_0^{n_i}\neq x_2^{m_i}y_0^{n_i}.$ By Proposition 4.14 in \cite{V}, the sought order of contact is greater or equal to $K(p_1,p_2).$ 

First, consider the case $K(p_1,p_2)=1.$ Let us show that the corresponding order of contact is equal to $1,$ that is, the corresponding branches of the curve $D$ intersect transversally at $p.$ To do this, we will compare the projections of the tangent lines to $\mathcal C$ at $p_1$ and $p_2.$ If they do not coincide, then the order of contact is indeed $1.$ The coefficients of the equations defining the projections of the corresponding tangent lines are equal to the last $2$ minors of the Jacobian matrix evaluated at the points $p_1$ and $p_2$ respectively. In our case, we need to check that $$\begin{vmatrix}M_2(p_1) & M_3(p_1)\\M_2(p_2) & M_3(p_2)\end{vmatrix}\neq 0,$$ where $M_2$ and $M_3$ are the last two minors of the matrix
$$\mathcal J=\begin{pmatrix}l\cdot x^{l-1} & k\cdot y^{k-1} & x^{m_1}y^{n_1}\\b\cdot l\cdot x^{l-1} & 0 & \alpha_1x^{m_1}y^{n_1} \end{pmatrix}.$$
If this determinant is equal to $0,$ then for some $\lambda,$ we have 
$$\begin{cases}
b\cdot l\cdot k \cdot x_1^{l-1}y_0^{k-1}=\lambda\cdot b\cdot l\cdot k \cdot x_2^{l-1}y_0^{k-1}\\
x_1^{m_1}y_0^{n_1}(\alpha_1l\cdot x_1^{l-1}-b\cdot l\cdot x_1^{l-1})=\lambda\cdot x_2^{m_1}y_0^{n_1}(\alpha_1\cdot l\cdot x_2^{l-1}-b\cdot l\cdot x_2^{l-1}).
\end{cases}$$
The first equality implies that $\lambda=\Big(\dfrac{x_1}{x_2}\Big)^{l-1}.$ Substituting this expression into the second equality, we obtain that $$x_1^{m_1+l-1}y_0^{n_1}\cdot l(\alpha_1-b)=\Big(\dfrac{x_1}{x_2}\Big)^{l-1}x_2^{m_1+l-1}y_0^{n_1}\cdot l(\alpha_1-b).$$ The latter is equivalent to $x_1^{m_1}=x_2^{m_1}.$ On the other hand, $K(p_1,p_2)=1.$ Thus we have $x_1^{m_1}\neq x_2^{m_1},$ which leads us to a contradiction. 

The statement for $K=K(p_1,p_2)>1$ follows from the computation above by the following genericity argument (non-trivial in the sense that it does not produce an explicit genericity condition on the coefficients $c_i$ for $K>1$). Indeed, if there are no other monomials of the form $t^ix^{m_i}y^{n_i}, i\neq K,$ then the change of variables $\check{t}=t^K$ reduces this case to the one considered above. Moreover, the monomials of the form $t^ix^{m_i}y^{n_i},~i> K,$ do not change the sought order of contact. Finally, we need to deal with the monomials $t^ix^{m_i}y^{n_i},~i<K.$ Let us consider the space parameterized by all possible choices of coefficients $c_i,i<K.$ The sought order of contact can be viewed as an integer-valued function constructively depending on the choice of the coefficients $c_i$ (i.e. representable as a finite linear combination of the indicator functions of algebraic sets). We already know its lower bound -- it is equal to $K.$ Moreover, if the coefficients $c_i,~i<K,$ are all equal to $0,$ then the desired order of contact is exactly $K.$ By upper semi-continuity of the intersection index the latter means that for generic choice of the coefficients $c_i,~i<K,$ the order of contact does not exceed $K,$ and thus, is equal to $K.$

Finally, by definition \ref{def_fps}, we obtain that the singularity of the curve $D$ at the point $p$ is indeed the forking--path singularity mentioned in Lemma \ref{fpsinfinity}.
\end{exa}

\section{Statement of the Main Result}\label{answer}
In this Section we give the formula for the support function $\mu_A$ of the Newton polytope $\mathcal M_A$ of the Morse discriminant in the space of univariate polynomials with given support $A\subset\Z$ and provide some examples of its usage. Let $A=\{a_0,\ldots,a_{|A|-1}\}\subset \Z\setminus\{0\}$ be a finite set affinely generating the lattice $\Z,$ satisfying Assumption \ref{gen_sing} and the condition  $\length(\conv (A))\geqslant 3.$

\subsection{Combinatorial types of tropical polynomials}\label{cd}
Any covector with non-negative entries in $\gamma\in(\R^{|A|})^*$ can be viewed as a function ${\gamma\colon A\to\R_{\geqslant 0}}.$ 
Denote by $N_{\gamma}$ the polygon $$N_{\gamma}=\conv\Big(\{(a_p,\gamma(a_p))\mid a_p\in A\}\cup\{(a_0,0),(a_{|A|-1},0)\}\Big).$$ 
The projection of its upper edges onto the first coordinate yields a subdivision of the interval $\conv(A)$ with vertices in $A.$ One can encode this subdivision by the subset ${W_{\gamma}=\{w_0,\ldots,w_k\}\subset A}$
containing all its vertices. We have $w_0=a_0$ and $w_k=a_{|A|-1}.$  

We now formulate the genericity assumptions on the covector $\gamma$ that we will need for our constructions. 
\begin{predpol}\label{slopes}
For any two distinct pairs $p<q$ and $r<s$ of elements in $A,$ we have 
\begin{equation*}
\dfrac{\gamma(p)-\gamma(q)}{q-p}\neq\dfrac{\gamma(s)-\gamma(r)}{r-s}.
\end{equation*}	
\end{predpol}

\begin{rem}\label{slopes_r1}
One can reformulate Assumption \ref{slopes} as follows. For any two distinct pairs $p<q$ and $r<s$ of elements in $A,$ the slopes of the intervals $[(p,\gamma(p)),(q,\gamma(q))]$ and $[(r,\gamma(r)),(s,\gamma(s))]$ do not coincide. 
\end{rem}

\begin{predpol}\label{roots}
For any $0\leqslant j<m\leqslant k-1,$ we have that 
\begin{equation*}
\dfrac{w_{j+1}\gamma(w_j)-w_j\gamma(w_{j+1})}{w_{j+1}-w_j}\neq\dfrac{w_{m+1}\gamma(w_m)-w_m\gamma(w_{m+1})}{w_{m+1}-w_m}.
\end{equation*}	
\end{predpol}

\begin{rem}
Denote by $\ell_j$ the line passing through the points $(w_j,\gamma(w_j))$ and $(w_{j+1},\gamma(w_{j+1})).$ Alternatively, Assumption \ref{roots} can be formulated as follows. For any two indices $0\leqslant j<m\leqslant k-1,$ the lines $l_j,l_m$ and $\{e_1=0\}$ do not meet at the same point. Also note that this condition is satisfied automatically for sets $A$ such that $0\notin\conv(A).$
\end{rem}

On the other hand, the covector $\gamma$ can be viewed as the tropical polynomial  $$\varphi_{\gamma}(X)=\bigoplus\limits_{a\in A_d}\gamma(a)\odot X^{\odot a}.$$

Note that each of the objects listed below depends on the covector $\gamma,$ but to simplify the notation 
we will omit the subscript $\gamma.$ For example, we shall write $W$ instead of $W_{\gamma}$ and $N$ instead of $N_{\gamma}.$ 
\begin{defin}\label{def_comb_data}
Using the polynomial $\varphi_{\gamma}$ we will extract the following {\it combinatorial data.} 
\begin{itemize}
	\item[--] the subset $W=\{w_0\le w_1,\ldots\le w_k\}\subset A$ with $w_0=a_0$ and $w_k=a_{|A|-1},$ that can alternatively be defined as the subset consisting of all the exponents $p\in A$ such that for some $X_0$ we have $\varphi_{\gamma}(X_0)=\gamma(p)\odot X_0^{\odot p}.$
	\item[--] For every $0\leqslant j<k,$ let us evaluate the monomial $\gamma(w_j)\odot X^{\odot w_j}$ at the corresponding root $r_j=\dfrac{\gamma(w_j)-\gamma(w_{j+1})}{w_{j+1}-w_j}$ of the tropical polynomial $\varphi_{\gamma}$ and place all the numbers $j\in\{0,1,\ldots,k-1\}$ in increasing order with respect to the values attained. This yields a sequence $Z=(z_1,z_2,\ldots,z_k).$ 
	\item[--] a sequence $M^j$ for every root $r_j$ of the tropical polynomial $\varphi_{\gamma},$ constructed as follows. The elements of $M^j$ are all the points in $p\in A\setminus\{w_j,w_{j+1}\}$ placed in $M^j$ in decreasing order with respect to the values of the monomials $\gamma(p)\odot X^{\odot p}$ attained at $r_j.$
\end{itemize}
\end{defin}
\begin{exa}\label{comb_data}
Take $A=\{-3,-1,1,2,4\}$ and $\gamma=(3,5,2,5,1).$ Thus we obtain 
\begin{multline*}
\varphi_{\gamma}=3\odot X^{\odot -3}\oplus 5\odot X^{\odot -1}\oplus 2\odot X^{\odot 1}\oplus 5\odot X^{\odot 2}\oplus 1\odot X^{\odot 4}=\\=\max\{-3X+3,~-X+5,~X+2,~2X+5,~4X+1\}.
\end{multline*}
Its graph is shown in Figure 1 below. We have $W=\{w_0,w_1,w_2,w_3\}=\{-3,-1,2,4\}.$ Moreover, since  $\varphi_{\gamma}(r_1)<\varphi_{\gamma}(r_0)<\varphi_{\gamma}(r_2)$, we have $Z=(1,0,2).$ Finally, we obtain $$M^0=(2,1,4),~M^1=(-3,1,4),~M^2=(1,-1,-3).$$
\begin{center}
	\begin{tikzpicture}[scale=0.5]
	\draw[->,thick] (0,0)--(0,18);
	\draw[->,thick] (0,0)--(20,0);
	\draw[thick] (2,15)--(15,2);
	\draw[thick, dashed] (0,17)--(2,15);
	\draw[thick, dashed] (15,2)--(17,0);
	\draw[thick] (3,2)--(11,18);
	\draw[thick, dashed] (2,0)--(3,2);
	\draw[thick] (6,2)--(10,18);
	\draw[thick, dashed] (6,2)--(5.5,0);
	\draw[thick] (3.67,18)--(9.33,1);
	\draw[thick,dashed] (9.33,1)--(9.67,0);
	\draw[thick] (2,2)--(17,17);
	\draw[thick,dashed] (0,0)--(2,2);
	\draw[thick, dashed, blue] (6,2)--(6,0);
	\draw[thick, blue] (6,11)--(6,2);
	\draw[ultra thick, blue] (3.67,18)--(6,11);
	\draw[ultra thick, blue] (7,10)--(6,11);
	\draw[thick, blue] (7,10)--(7,2);
	\draw[thick, dashed, blue] (7,2)--(7,0);
	\draw[ultra thick, blue] (7,10)--(9,14);
	\draw[thick, dashed, red] (9,14)--(0,14);
	\draw[thick, dashed, red] (7,10)--(0,10);
	\draw[thick, dashed, red] (6,11)--(0,11);
	\draw[thick, blue] (9,14)--(9,2);
	\draw[thick, dashed, blue] (9,2)--(9,0);
	\draw[ultra thick, blue] (9,14)--(10,18);
	\node[below left,scale=0.6] at (0,0) {$(-\infty,-\infty)$};
	\node[below right,scale=0.7] at (14,14) {$X+2$};
	\node[right,scale=0.7] at (10,16) {$2X+5$};
	\node[above,scale=0.7] at (2,16) {$-X+5$};
	\node[right,scale=0.7] at (4,17) {$-3X+3$};
	\node[left,scale=0.7] at (9.5,15.5) {$4X+1$};
	\node[below,scale=0.7] at (6,0) {$r_0$};
	\node[below,scale=0.7] at (7,0) {$r_1$};
	\node[below,scale=0.7] at (9,0) {$r_2$};
	\node[left,scale=0.7,red] at (6,8) {$2$};
	\node[left,scale=0.7,red] at (6,6) {$1$};
	\node[left,scale=0.7,red] at (6,2) {$4$};
	\node[above right,scale=0.7,red] at (6.7,8.2) {$-3$};
	\node[left,scale=0.7,red] at (7,7) {$1$};
	\node[left,scale=0.7,red] at (7,6) {$4$};
	\node[right,scale=0.7,red] at (9,9) {$1$};
	\node[right,scale=0.7,red] at (9,8) {$-1$};
	\node[right,scale=0.7,red] at (9,2) {$-3$};
	\node[left, scale=0.7] at (0,10) {$\varphi_{\gamma}(r_1)$};
	\node[left, scale=0.7] at (0,11) {$\varphi_{\gamma}(r_0)$};
	\node[left, scale=0.7] at (0,14) {$\varphi_{\gamma}(r_2)$};
	\filldraw[color=red, fill=red!70, thick](6,8) circle (0.1);
	\filldraw[color=red, fill=red!70, thick](6,6) circle (0.1);
	\filldraw[color=red, fill=red!70, thick](6,2) circle (0.1);
	\filldraw[color=red, fill=red!70, thick](7,6) circle (0.1);
	\filldraw[color=red, fill=red!70, thick](7,8) circle (0.1);
	\filldraw[color=red, fill=red!70, thick](7,7) circle (0.1);
	\filldraw[color=red, fill=red!70, thick](9,9) circle (0.1);
	\filldraw[color=red, fill=red!70, thick](9,8) circle (0.1);
	\filldraw[color=red, fill=red!70, thick](9,2) circle (0.1);
	\node[below] at (8,-1.5) {Figure 1. The graph of the tropical polynomial $\varphi_{\gamma}$.};
	\end{tikzpicture}
\end{center}
\end{exa}

\begin{rem}
The order if the elements in the sequences $M^j$ and $Z$ is well-defined due to Assumptions \ref{slopes} and \ref{roots} on the covector $\gamma.$ For any two lines of the form $pX+\gamma(p)$ and $qX+\gamma(q),$ the $X-$coordinate of their intersection is equal to $\dfrac{\gamma(p)-\gamma(q)}{q-p}.$ If it coincides with a root $r_j$ of the tropical polynomial $\varphi_{\gamma},$ then we have the equality $$\dfrac{\gamma(p)-\gamma(q)}{q-p}=\dfrac{\gamma(w_j)-\gamma(w_{j+1})}{w_{j+1}-w_j}.$$ The latter contradicts Assumption \ref{slopes}. 

Moreover, if $\varphi_{\gamma}(r_j)=\varphi_{\gamma}(r_m)$ for some $j<m,$ then we have the following equality: $w_jr_j+\gamma(w_j)=w_mr_m+\gamma(w_m).$ Substituting the expressions $r_j=\dfrac{\gamma(w_j)-\gamma(w_{j+1})}{w_{j+1}-w_j},$ we obtain $$\dfrac{w_{j+1}\gamma(w_j)-w_j\gamma(w_{j+1})}{w_{j+1}-w_j}=\dfrac{w_{m+1}\gamma(w_m)-w_m\gamma(w_{m+1})}{w_{m+1}-w_m},$$
which contradicts Assumption \ref{roots}.
\end{rem}

\begin{rem}\label{slopes_r2}
In Remark \ref{slopes_r1} we noted that Assumption \ref{slopes} can be reformulated in more geometric terms, since the number $\dfrac{\gamma(p)-\gamma(q)}{q-p}$ is equal to the slope of the interval $[(p,\gamma(p)), (q,\gamma(q))].$ Thus, to determine the order of elements in the sequence $M^j,$ we need to compare the slopes of all possible intervals $[(p,\gamma(p)), (q,\gamma(q))],~p,q\in A\setminus\{w_j,w_{j+1}\},$ with the slope of $[(w_j,\gamma(w_j)),(w_{j+1},\gamma(w_{j+1}))].$
\end{rem}

\begin{rem}\label{trop_gen_cond}
The computation above implies that tropical polynomial $\varphi_{\gamma}(X)$ is Morse in the sense of Definition \ref{trop_Morse_defin} if and only if the covector $\gamma$ satisfies Assumptions \ref{slopes} and \ref{roots}. 
\end{rem}

\begin{utver}\label{cones}
	All the covectors $\gamma\in(\R^{|A|})^*$ giving the same combinatorial data generate a full-dimensional convex cone. 
\end{utver}

\begin{rem}
Due to Remark \ref{trop_gen_cond}, the cones from Proposition \ref{cones} can be viewed as combinatorial types of tropical Morse polynomials. Namely, each of the cones can be identified with the tuple of combinatorial data (namely, the subset $W$ and the sequences $M^j$ and $Z$) given by any covector lying inside this cone.
\end{rem}

\subsection{The support function of the polytope \texorpdfstring{$\mathcal M_A$}{}}

Before we give the formula for the support function $\mu_A$ of the polytope $\mathcal M_A$ we need to introduce some more notation. 
For every sequence $M^j=(m_1^j, m_2^j ,\ldots)$ we construct another sequence $b^j=(b_0^j,b_1^j,\ldots)$ defined as follows: $b_0^j=\gcd(w_j,w_{j+1})$ 
and $b_{l+1}^j=\gcd(b_l^j,m_{l+1}^j).$ Note that since $A$ affinely generates the lattice $\Z,$ the sequences $b^j$ stabilize at $1.$ 
We denote by $C^j_{\gamma}$ the following sum:
\begin{equation}\label{FPS_contrib}
C^j_{\gamma}=\sum\limits_{l\geqslant 1}\Big((w_{j+1}-w_j)\gamma(m_l^j))+(m_l^j-w_{j+1})\gamma(w_j)+(w_j-m_l^j)\gamma(w_{j+1})\Big)(b_{l-1}^j-b_l^j).
\end{equation}
\begin{rem}\label{gcd_stab}
It follows immediately from the definition that $C^j_{\gamma}$ is equal to $0$ whenever $\gcd(w_j,w_{j+1})=1.$ 
\end{rem}
\begin{exa}\label{seq_exa}
Consider the support set $A=\{-3,-1,1,2,4\}$ and the covector $\gamma=(3,5,2,5,1)$ from Example \ref{comb_data}. Recall that we have $W=\{w_0,w_1,w_2,w_3\}=\{-3,-1,2,4\}$ and  $M^0=(2,1,4),~M^1=(-3,1,4),~M^2=(1,-1,-3).$ Thus, according to the definition above, we obtain $b^0=(1,1,1),~b^1=(1,1,1)$ and $b^2=(2,1,1).$ The sums $C_{\gamma}^0$ and $C_{\gamma}^0$ are equal to $0$ according to Remark \ref{gcd_stab}. The sum $C^2_{\gamma}$ is non-zero. Indeed, we have: 
$$C^2_j=\Big((w_3-w_2)\gamma(1)+(1-w_3)\gamma(w_2)+(w_2-1)\gamma(w_3)\Big)(2-1)=2\gamma(1)-3\gamma(2)+\gamma(4).$$
\end{exa}

We are now ready to give the formula for the support function $\mu_A$ of the polytope $\mathcal M_A$ which will allow us to find the vertices (up to a shift) of the polytope $\mathcal M_A.$                                                                                                                                                                                                                                                                                                                                                                                                                                                                                                                                                                                                                                                                                                                                                                                                                                                                                                                                                                                                                                                                                                                                                                                                                                                                                                                                                                                                                                                                                                                                                                                                                                                                                                                                                                                                                                                                                                                                                                                                                                                                                                                                                                                                                                                                                                                                                                                                                                                                                                                                                                                                                                                                                                                                                                                                                                                                                                                                                                                                                                                                                                                                                                                                                                                                                                                                                                                                                                                                                                                                                                                                                                                                                                                                                                                                                                                                                                                                                                                                                                                                                                                                                                                                                                                                                                                                                                                                                                                                                                                                                                                                                                                                                                                                                                                                                                                                                                                                                                                                                                                                           
\begin{theor}\label{support}
	In the same notation as above, the value of the support function $\mu_{A}$ of the Morse polytope $\mathcal M_A$ (up to a shift depending on integer constants $c_1$ and $c_2$) at the covector $\gamma$ can be computed via the following formula:
	\begin{multline}\label{main_formula}
	\mu_{A}(\gamma)=\sum\limits_{j=1}^{k} S_{z_j}\Big(d_{z_j}-3+\sum\limits_{l=1}^{j-1}2d_{z_l}\Big)+\sum\limits_{j=0}^{j=k-1}C^j_{\gamma}\\+
	\big((|w_0|-w_0)(w_k-w_0)+c_1\big)\gamma(w_0)+\big((w_k+|w_k|)(w_k-w_0)+c_2\big)\gamma(w_k),
	\end{multline}
	where $d_i=w_{i+1}-w_i$ and $S_i=w_{i+1}\gamma(w_i)-w_i\gamma(w_{i+1}).$
\end{theor}
\begin{rem}
The function $\mu_{A}$ is piecewise linear and convex, and it is linear on each of the cones in $(\R^{|A|})^*$ generated by the covectors $\gamma\in(\R^{|A|})^*$ giving the same combinatorial data. Associating each cone with the set of coefficients of the function $\mu_{A}$ on it defines a surjection onto the set of vertices of the polytope $\mathcal M_A.$ 
\end{rem}

\begin{rem}
The formula in \ref{support} depends on the constants $c_1$ and $c_2$. Since they do not change the structure of the desired polytope, they can be chosen arbitrarily. For example, one can choose $c_1,c_2$ in such a way that the sought polytope is contained in the positive octant and lies as close a possible to the origin in $\R^{|A|}.$ We will discuss the choice of shifts in subsequent sections. 
 \end{rem}

\begin{exa}\label{vert1}
Let us compute the support function $\mu_A$ for the support set $A=\{-3,-1,1,2,4\}$ and the combinatorial data from Examples \ref{comb_data} and \ref{seq_exa}. Recall that we have $W=\{w_0,w_1,w_2,w_3\}=\{-3,-1,2,4\}$ and $Z=(1,0,2).$ Moreover, the summands $C^j$ were already computed in \ref{seq_exa}: namely, $C_{\gamma}^0=C_{\gamma}^1=0$ and $C_{\gamma}^2=2\gamma(1)-3\gamma(2)+\gamma(4).$
Using formula (\ref{main_formula}) from Theorem \ref{support}, we obtain: 
\begin{multline*}
\mu_A(\gamma)=(w_2\gamma(w_1)-w_1\gamma(w_2))(w_2-w_1-3)+(w_1\gamma(w_0)-w_0\gamma(w_1))(2w_2-2w_1+w_1-w_0-3)+\\
(w_3\gamma(w_2)-w_2\gamma(w_3))(2w_2-2w_1+2w_1-2w_0+w_3-w_2-3)+2\gamma(1)-3\gamma(2)+\gamma(4)+\\+
(-2w_0(w_3-w_0)+c_1)\gamma(w_0)+(2w_3(w_3-w_0)+c_2)\gamma(w_3)=\\=\big((w_1(2w_2-w_1-w_0-3)-2w_0(w_3-w_0)+c_1\big)\gamma(w_0)+\\+\big(w_2(w_2-w_1-3)-w_0(2w_2-w_1-w_0-3)\big)\gamma(w_1)+\\+\big(w_3(w_3+w_2-2w_0-3)-w_1(w_2-w_1-3)\big)\gamma(w_2)+\\+\big(2w_3(w_3-w_0)-w_2(w_3+w_2-2w_0-3)+c_2\big)\gamma(w_3)+2\gamma(1)-3\gamma(2)+\gamma(4)=\\=(37+c_1)\gamma(-3)+15\gamma(-1)+2\gamma(1)+33\gamma(2)+(39+c_2)\gamma(4).
\end{multline*}
So, we obtained one of the vertices of the sought polytope $\mathcal M_A.$ Up to a shift of the form  $(c_1,0,0,0,c_2)$ which can be chosen arbitrarily and remains the same for all other vertices, the corresponding vertex is $(37,15,2,33,39).$
\end{exa}

\begin{exa}\label{vert2}
Consider the same set $A=\{-3,-1,1,2,4\}$ as in Example \ref{vert1}. Let us compute one more vertex of the polytope $\mathcal M_A.$ Namely, let us compute $\mu_A$ on the covectors $\gamma$ such that the corresponding subdivision is $W=\{w_0,w_1\}=\{-3,4\}.$ Then, since $\gcd(-3,4)=1,$ the summand $C_{\gamma}^0=0,$ and using (\ref{main_formula}) we obtain: 
\begin{multline*}
\mu_A(\gamma)=(w_1\gamma(w_0)-w_0\gamma(w_1))(w_1-w_0-3)+c_1\gamma(w_0)+c_2\gamma(w_1)-\\-2w_0(w_1-w_0)\gamma(w_0)+2w_1(w_1-w_0)\gamma(w_1)=(58+c_1)\gamma(-3)+(68+c_2)\gamma(4).
\end{multline*}
So, up to the same shift $(c_1,0,0,0,c_2)$ as in Example \ref{vert1}, we obtain another vertex $(58,0,0,0,68).$
\end{exa}

\begin{rem}\label{homog}
Since rescaling the variables or multiplying all the coefficients of a polynomial by the same non-zero number does not affect whether or not it is Morse, the polytope $\mathcal M_A$ should lie in the intersection of two affine hyperplanes in $\R^{|A|}:$ namely, the hyperplanes $\{e_0+\ldots+e_{|A|-1}=d_1\}$ and  $\{a_0\cdot e_0+\ldots+a_{|A|-1}\cdot e_{|A|-1}=d_2\}$ for some $d_1,d_2.$ 
\end{rem}

\begin{exa}\label{homog_exa}
Let us look at the vertices found in Examples \ref{vert1} and \ref{vert2}. We have indeed that $$37+15+2+33+39=126=58+68,$$ and $$(-3)\cdot 37+(-1)\cdot 15+1\cdot 2+2\cdot 33+4\cdot 39=98=(-3)\cdot 58+4\cdot 68.$$
\end{exa}

\subsection{Special case: polynomials}
In this subsection we will consider an important special case of our main problem, namely, we will compute the support function of the polytope $\mathcal M_A$ for an arbitrary support set $A$ such that $0\notin\conv(A).$ In this case, for any covector $\gamma$ the corresponding tropical polynomial $\varphi_{\gamma}$ is monotone, therefore, the sequence $Z$ is fully determined by the subdivision $W.$ 

\begin{exa}\label{trop_poly}
Take $A=\{1,2,3,4\},~\gamma=(1,4,3,3).$ Then, the corresponding tropical polynomial $\varphi_{\gamma}(X)$ is equal to $$1\odot X^{\odot 1}\oplus 4\odot X^{\odot 2}\oplus 3\odot X^{\odot 3}\oplus 3\odot X^{\odot 4}=\max\{X+1,~2X+4,~3X+3,~4X+3\}.$$ Its graph is shown in Figure 2 below. We have $W=\{w_0,w_1,w_2\}=\{1,2,4\},~M^0=(3,4)$ and $M^1=(3,1).$ The sequence $Z$ is completely determined by $W,$ and in this case we have $Z=(0,1).$
\end{exa}
We will now give the formula for the support function $\mu_A$ for a set $A$ such that $0\notin \conv(A).$ Without loss of generality, we can assume $A\subset \Z_{>0},$ since the same formula works for $A\subset \Z_{<0}$ up to the changes of the form $w_j\mapsto -w_{k-j}$ and $\gamma(p)\mapsto\gamma(-p).$
The following result is a straightforward corollary of Theorem \ref{support}. 

\begin{sledst}\label{support_poly}
	In the same notation as above, for any set $A\subset\Z_{>0}$ generating the lattice $\Z,$ the value of the support function $\mu_{A}$ of the Morse polytope $\mathcal M_A$ (up to a shift depending on integer constants $c_1$ and $c_2$) at the covector $\gamma$ can be computed via the following formula:
	\begin{multline}
	\mu_{A}(\gamma)=\big(w_1(w_1-w_0-3)+c_1\big)\gamma(w_0)+\sum\limits_{j=1}^{k-1}(w_{j+1}-w_{j-1})(w_{j-1}+w_j+w_{j+1}-2w_0-3)\gamma(w_j)+\\+\big((w_k-w_{k-1})(2w_k+w_{k-1}-2w_0-3)+3w_k+c_2\big)\gamma(w_k)+\sum\limits_{j=0}^{j=k-1}C^j_{\gamma}.
	\end{multline}
\end{sledst}
\begin{center}
\begin{tikzpicture}[scale=0.5]
\draw[->,thick] (0,0)--(0,20);
\draw[->,thick] (0,0)--(15,0);
\draw[thick] (2,8)--(14,11);
\draw[thick, dashed] (2,7)--(0,5.5);
\draw[thick, dashed] (3.75,0.5)--(4,1);
\draw[thick, dashed] (2,3)--(0,1);
\draw[thick] (2,7)--(14,16);
\draw[thick] (2,3)--(14,15);
\draw[thick] (4,1)--(13,19);
\draw[ultra thick, dashed, blue] (2,8)--(0,7.5);
\draw[ultra thick, blue] (2,8)--(4,8.5);
\draw[thick, blue,dashed] (4,0.6)--(4,0);
\draw[thick, blue] (10,13)--(10,1);
\draw[thick,blue] (4,8.5)--(4,0.6);
\draw[thick, blue, dashed] (10,1)--(10,0);
\draw[ultra thick, blue] (4,8.5)--(10,13);
\draw[ultra thick, blue] (10,13)--(13,19);
\draw[thick, dashed, red] (10,13)--(0,13);
\draw[thick, dashed, red] (4,8.5)--(0,8.5);
\node[left, scale=0.8] at (0,8.5) {$\varphi_{\gamma}(r_0)$};
\node[left, scale=0.8] at (0,13) {$\varphi_{\gamma}(r_1)$};
\node[below left,scale=0.6] at (0,0) {$(-\infty,-\infty)$};
\node[above left,scale=0.7] at (14,9.5) {$X+1$};
\node[above left,scale=0.7] at (14,12) {$3X+3$};
\node[above left,scale=0.7] at (14,15.7) {$2X+4$};
\node[left,scale=0.7] at (11,15) {$4X+3$};
\node[below] at (4,0) {$r_0$};
\node[below] at (10,0) {$r_1$};
\node[left,scale=0.7,red] at (4,5) {$3$};
\node[left,scale=0.7,red] at (4,1) {$4$};
\node[left,scale=0.7,red] at (10,11) {$3$};
\node[below left,scale=0.7,red] at (10,9.8) {$1$};
\filldraw[color=red, fill=red!70, thick](4,1) circle (0.1);
\filldraw[color=red, fill=red!70, thick](4,5) circle (0.1);
\filldraw[color=red, fill=red!70, thick](10,11) circle (0.1);
\filldraw[color=red, fill=red!70, thick](10,10) circle (0.1);
\node[below] at (8,-1.5) {Figure 2. The graph of the tropical polynomial $\varphi_{\gamma}$.};
\end{tikzpicture}
\end{center}
             
\begin{sledst}\label{sp_case}
For a set $A$ of the form $A=[1,n]\cap\Z$ the formula after choosing $c_1=4$ and $c_2=6-6n$ in Theorem \ref{support} is of the following form:
 \begin{multline}
 \mu_{A}(\gamma)=(w_1-2)^2\gamma(1)+\sum\limits_{j=1}^{k-1}(w_{j+1}-w_{j-1})(w_{j-1}+w_j+w_{j+1}-5)\gamma(w_j)+\\+(n-1-w_{k-1})(2n+w_{k-1}-6)\gamma(n)+\sum\limits_{j=0}^{j=k-1}C^j_{\gamma}.
 \end{multline}
\end{sledst}

\begin{exa}\label{seq_poly}
	Let us compute the sums $C^j_{\gamma}$ for $A=\{1,2,3,4\}$ and a covector $\gamma$ giving the same combinatorial data as in Example \ref{trop_poly}. We have $W=\{1,2,4\},~M^0=(3,4)$ and $M^1=(3,1).$ Therefore we obtain $b^0=(1,1,1)$ and $b^1=(2,1,1),$ which yields $C^0_{\gamma}=0$ and $$C^1_{\gamma}=(-\gamma(2)+2\gamma(3)-\gamma(4))(2-1)+(2\gamma(1)-3\gamma(2)+\gamma(4))(1-1)=-\gamma(2)+2\gamma(3)-\gamma(4).$$
\end{exa}

\begin{exa}\label{exa_vert}
Let us compute the function $\mu_A$ on the cone considered in Examples \ref{trop_poly} and \ref{seq_poly}. 
By Corollary \ref{sp_case}, we have 
\begin{multline*}
\mu_A(\gamma)=(2-2)^2\gamma(1)+(4-1)(1+2+4-5)\gamma(2)+(4-1-2)(8+2-6)\gamma(4)-\gamma(2)+2\gamma(3)-\gamma(4)=\\=0\cdot\gamma(1)+5\gamma(2)+2\gamma(3)+3\gamma(4).
\end{multline*}
\end{exa}
\subsection{Example: degree 4 polynomials}
In this subsection we will compute the vertices of the Newton polytope of the Morse discriminant in the space of polynomials with support $A=\{1,2,3,4\}$ using Theorem \ref{support} and its special case mentioned in Corollary \ref{sp_case}.

To do this, we need to enumerate all possible tuples of combinatorial data arising from generic covectors in $(\R^4)^*$ and evaluate the support function on the corresponding cones. 

There are $5$ cases, we will treat each of them separately. The graphs of typical representatives $\gamma\in(\R^4)*$ (viewed as functions $\gamma\colon A\to\R$) of the corresponding cones are shown in Figure 3. The elements of the subsets $W$ are marked red. By comparing the slope of the interval $[(1,\gamma(1)),(3,\gamma(3))]$ with the slope of $[(2,\gamma(2)),(4,\gamma(4))]$ we distinguish the cases $4$ and $5$ (see Remark \ref{slopes_r2}).

\begin{itemize}
    \item[--] $W=\{1,4\}.$ In this case, we have $\gcd(w_0,w_1)=1,$ therefore the summand $C^0$ is equal to $0$ and does not depend on the sequence $M^0.$ The corresponding vertex is $(4,0,0,6).$
    \item[--] $W=\{1,2,3,4\}.$ Similarly to the previous case, the summands $C^j$ do not depend on the sequences $M^j$ and are equal to $0.$ The corresponding vertex is $(0,2,8,0).$
	\item[--] $W=\{1,3,4\}.$ In this case, $\gcd(w_0,w_1)=1,~\gcd(w_1,w_2)=1,$ so the summands $C^0$ and $C^1$ do not depend on the sequences $M^0$ and $M^1$ and are equal to $0.$ 
	The corresponding vertex is $(1,0,9,0).$
    \item[--] $W=\{1,2,4\},~M^1=(3,1).$ This case was considered in detail in Example \ref{exa_vert}, and the corresponding vertex is $(0,5,2,3).$
	\item[--] $W=\{1,2,4\},~M^1=(1,3).$ and the corresponding vertex is $(2,3,0,5).$
\end{itemize}
	\begin{center}
	\begin{tikzpicture}[scale=0.7]
	\draw[ultra thick] (0,0)--(3,0);
	\draw[ultra thick] (5,0)--(8,0);
	\draw[ultra thick] (10,0)--(13,0);
	\draw[thick] (0,0)--(0,2);
	\draw[thick] (3,0)--(3,3);
	\draw[ultra thick] (0,2)--(3,3);
	\draw[dashed] (1,0)--(1,1);
	\draw[dashed] (2,0)--(2,1.8);
	\filldraw[color=black, fill=red!70, thick](0,0) circle (0.1);
	\filldraw[color=black, fill=white, thick](1,0) circle (0.1);
	\filldraw[color=black, fill=red!70, thick](3,0) circle (0.1);
	\filldraw[color=black, fill=white, thick](2,0) circle (0.1);
	\filldraw[color=black, fill=black, thick](0,2) circle (0.1);
	\filldraw[color=black, fill=black, thick](1,1) circle (0.1);
	\filldraw[color=black, fill=black, thick](2,1.8) circle (0.1);
	\filldraw[color=black, fill=black, thick](3,3) circle (0.1);
	\draw[thick] (5,0)--(5,1);
	\draw[thick] (8,0)--(8,1.5);
	\draw[dashed] (6,0)--(6,3.5);
	\draw[dashed] (7,0)--(7,3);
	\draw[ultra thick] (5,1)--(6,3.5);
	\draw[ultra thick] (7,3)--(6,3.5);
	\draw[ultra thick] (7,3)--(8,1.5);
	\filldraw[color=black, fill=red!70, thick](5,0) circle (0.1);
	\filldraw[color=black, fill=red!70, thick](6,0) circle (0.1);
	\filldraw[color=black, fill=red!70, thick](7,0) circle (0.1);
	\filldraw[color=black, fill=red!70, thick](8,0) circle (0.1);
	\filldraw[color=black, fill=black, thick](5,1) circle (0.1);
	\filldraw[color=black, fill=black, thick](6,3.5) circle (0.1);
	\filldraw[color=black, fill=black, thick](7,3) circle (0.1);
	\filldraw[color=black, fill=black, thick](8,1.5) circle (0.1);
	\draw[thick] (10,0)--(10,1);
	\draw[thick] (13,0)--(13,1.5);
	\draw[dashed] (11,0)--(11,1.3);
	\draw[dashed] (12,0)--(12,3);
	\draw[ultra thick] (10,1)--(12,3);
	\draw[ultra thick] (12,3)--(13,1.5);
	\filldraw[color=black, fill=red!70, thick](10,0) circle (0.1);
	\filldraw[color=black, fill=white, thick](11,0) circle (0.1);
	\filldraw[color=black, fill=red!70, thick](12,0) circle (0.1);
	\filldraw[color=black, fill=red!70, thick](13,0) circle (0.1);
	\filldraw[color=black, fill=black, thick](10,1) circle (0.1);
	\filldraw[color=black, fill=black, thick](11,1.3) circle (0.1);
	\filldraw[color=black, fill=black, thick](12,3) circle (0.1);
	\filldraw[color=black, fill=black, thick](13,1.5) circle (0.1);
		\node[below] at (0,0) {${1}$};
	\node[below] at (1,0) {${2}$};
	\node[below] at (2,0) {${3}$};
	\node[below] at (3,0) {${4}$};
	\node[below] at (5,0) {${1}$};
	\node[below] at (6,0) {${2}$};
	\node[below] at (7,0) {${3}$};
	\node[below] at (8,0) {${4}$};
	\node[below] at (10,0) {${1}$};
	\node[below] at (11,0) {${2}$};
	\node[below] at (12,0) {${3}$};
	\node[below] at (13,0) {${4}$};
	\node[below,scale=0.8] at (1.5,-1) {\circled{1}};
	\node[below,scale=0.8] at (6.5,-1) {\circled{2}};
	\node[below,scale=0.8] at (11.5,-1) {\circled{3}};
	\end{tikzpicture}
	\begin{tikzpicture}[scale=0.7]
   	\draw[ultra thick] (15,0)--(18,0);
	\draw[thick] (18,0)--(18,2.5);
	\draw[thick] (15,0)--(15,1.5);
	\draw[dashed] (17,0)--(17,2);
	\draw[dashed] (16,0)--(16,3);
	\draw[ultra thick] (18,2.5)--(16,3);
	\draw[ultra thick] (16,3)--(15,1.5);
	\draw[ultra thick, dashed, violet] (15,1.5)--(17,2);
	\filldraw[color=black, fill=red!70, thick](15,0) circle (0.1);
	\filldraw[color=black, fill=white, thick](17,0) circle (0.1);
	\filldraw[color=black, fill=red!70, thick](16,0) circle (0.1);
	\filldraw[color=black, fill=red!70, thick](18,0) circle (0.1);
	\filldraw[color=black, fill=black, thick](18,2.5) circle (0.1);
	\filldraw[color=black, fill=black, thick](17,2) circle (0.1);
	\filldraw[color=black, fill=black, thick](16,3) circle (0.1);
	\filldraw[color=black, fill=black, thick](15,1.5) circle (0.1);
	\node[below] at (15,0) {${1}$};
	\node[below] at (16,0) {${2}$};
	\node[below] at (17,0) {${3}$};
	\node[below] at (18,0) {${4}$};
	\draw[ultra thick] (20,0)--(23,0);
	\draw[thick] (23,0)--(23,2.5);
	\draw[thick] (20,0)--(20,2);
	\draw[dashed] (22,0)--(22,1);
	\draw[dashed] (21,0)--(21,3);
	\draw[ultra thick] (23,2.5)--(21,3);
	\draw[ultra thick] (21,3)--(20,2);
	\draw[ultra thick, dashed, violet] (20,2)--(22,1);
	\filldraw[color=black, fill=red!70, thick](20,0) circle (0.1);
	\filldraw[color=black, fill=white, thick](22,0) circle (0.1);
	\filldraw[color=black, fill=red!70, thick](21,0) circle (0.1);
	\filldraw[color=black, fill=red!70, thick](23,0) circle (0.1);
	\filldraw[color=black, fill=black, thick](23,2.5) circle (0.1);
	\filldraw[color=black, fill=black, thick](22,1) circle (0.1);
	\filldraw[color=black, fill=black, thick](21,3) circle (0.1);
	\filldraw[color=black, fill=black, thick](20,2) circle (0.1);
	\node[below] at (20,0) {${1}$};
	\node[below] at (21,0) {${2}$};
	\node[below] at (22,0) {${3}$};
	\node[below] at (23,0) {${4}$};
	\node[below,scale=0.8] at (16.5,-1) {\circled{4}};
	\node[below,scale=0.8] at (21.5,-1) {\circled{5}};
	\node[below] at (18,-2) {Figure 3. The $5$ vertices of the polytope $\mathcal M_A$.};
	\end{tikzpicture}
\end{center}
The polytope $\mathcal M_A$ is a polygon in $\R^4,$ and its image under the projection forgetting the first and the last coordinates is shown in Figure $4$ below. 
\begin{center}
	\begin{tikzpicture}[scale=0.4]
	\draw[ultra thick] (9,0)--(8,2);
	\draw[ultra thick] (8,2)--(2,5);
	\draw[ultra thick] (2,5)--(0,3);
	\draw[ultra thick] (0,3)--(0,0);
	\draw[ultra thick] (0,0)--(9,0);
	\filldraw[color=black, fill=blue!70, thick](0,0) circle (0.15);
	\filldraw[color=black, fill=blue!70, thick](8,2) circle (0.15);
	\filldraw[color=black, fill=blue!70, thick](9,0) circle (0.15);
	\filldraw[color=black, fill=blue!70, thick](2,5) circle (0.15);
    \filldraw[color=black, fill=blue!70, thick](0,3) circle (0.15);
	\node[left,scale=0.9] at (-0.2,0) {\circled{1}};
	\node[right,scale=0.9] at (8.2,2) {\circled{2}};
	\node[right,scale=0.9] at (9.2,0) {\circled{3}};
	\node[above right,scale=0.9] at (2,5) {\circled{4}};
	\node[left,scale=0.9] at (-0.2,3) {\circled{5}};
	\node[below] at (5,-2) {Figure 4. A projection of the polytope $\mathcal M_A$.};
	\end{tikzpicture}
\end{center}

\section{Proof of the Main Result}\label{the_proof}
\subsection{Step 1: the geometric interpretation of the problem}
The vertices of the polytope $\mathcal M_{A}$ are in 1-to-1 correspondence with the full-dimensional cones of its dual fan. The support function $\mu_A$ is linear on each of these cones, and its coefficients on the given cone are the coordinates of the corresponding vertex. 

As it was discussed in Remark \ref{homog}, the sought polytope lies in the intersection of the following two hyperplanes in $\R^{|A|}:$ the hyperplane  $\{e_0+\ldots+e_{|A|-1}=d_1\}$ and the hyperplane $\{a_0\cdot e_0+\ldots+a_{|A|-1}\cdot e_{|A|-1}=d_2\}$ for some $d_1,d_2.$ Therefore it suffices to compute $\mu_A$ only on the covectors $\gamma\in(\R^{|A|})^*$ with non-negative entries. 

Moreover, to find the coefficients of the function $\mu_A$ on its domains of linearity, it is enough to compute $\mu_{A}$ on rational, or, equivalently, on integer covectors supported at the corresponding vertices. The latter observation allows to use the following geometric interpretation of the main problem. Namely, we can use the same idea as, for instance, in the works \cite{AE},\cite{E3} and \cite{EK}. Let $\gamma$ be an integer covector with non-negative entries supported at a vertex of $\mathcal M_{A}.$ Alternatively it can be viewed as a function $\gamma\colon A\to\Z_{\geqslant 0}.$
Replacing the coefficients of $x^p,~p\in A$ of a polynomial $f(x),~\supp(f)=A,$ with polynomials of degrees $\gamma(p)$ in a new variable $t$ turns the Morse discriminant into a polynomial in $t.$ And since we can interpret the value $\mu_A(\gamma)$ as the number of roots of this univariate polynomial, the main problem of this paper can be reduced to the following one.

\begin{prb}
Let $\gamma\colon A\to \Z_{\geqslant 0}$ be an arbitrary function and $q_p,v_p,~p\in A$ be generic tuples of complex numbers. For how many values of the parameter $t\in \C$ is the polynomial $f_t(x)=\sum\limits_{p\in A}(q_p+v_pt^{\gamma(p)})x^p$ not Morse?
\end{prb}

This question was discussed in Example 1.1 of \cite{E3}, and the answer was obtained for a special case of a concave function $\gamma\colon\{1,\ldots,n\}\to\Z_{\geqslant 0}.$ Using a similar approach, we will obtain the answer for any function $\gamma\colon A\to\Z_{\geqslant 0}.$ 

We will represent the function $\mu_A$ as the solution to a system of $3$ equations, which we will deduce in Subsections \ref{Eq12}--\ref{sub_FPS}, and then solve those equations in Subsection \ref{sol_eq} to obtain the final answer.

\subsection{Step 2: the first two equations}\label{Eq12}

Given a function $\gamma \colon A\to \Z_{\geqslant 0},$ we consider the hypersurface $\mathcal H=\{f_t(x)-y\}\subset \CC^3.$ Let $\pi$ be the projection $\pi\colon\CC^3\to\CC^2,~~(x,y,t)\mapsto (y,t)$, and let $\mathcal A_1$, $\mathcal A_2$ and $2\mathcal A_1$ be the open multisingularity strata of its restriction $\pi\mid_{\mathcal H}$ to the hypersurface $\mathcal H$. The sets $\mathcal A_1, \mathcal A_2$ and $2\mathcal A_1$ consist of all the points $(y,t)\in\CC^2$ such that the equation $f_t(x)=y$ has exactly one root of multiplicity $2$, exactly one root of multiplicity $3$ and exactly two roots of multiplicity $2$, respectively. Due to Assumption \ref{gen_sing}, for the set $\tilde{A},$ the only strata of codimension $2$ are $\mathcal A_2$ and $2\mathcal A_1,$ while the strata of other singularities are of strictly higher codimension. 

Denote by $\Delta$ the Newton polytope of the polynomial $f_t(x)-y.$ Equivalently, $\Delta$ is the convex hull of the set $$\tilde{A}=\{(0,1,0)\}\cup\{(a_0,0,0)\}\cup\{(a_{|A|-1},0,0)\}\cup\{\big(p,0,\gamma(p)\big)\mid p\in A\}\subset\Z^3.$$ 

All the convex subdivisions of the interval $\conv(A)$ are in 1-to-1 correspondence with the subsets of the form $W=\{w_0 < w_1<\ldots < w_{k-1} < w_k\}\subset A$ with $w_0=a_0$ and $w_k=a_{|A|-1}.$ Moreover, every function $\gamma \colon A\to \Z_{\geqslant 0}$ defines a convex subdivision of the interval $\conv(A).$ Indeed, let $N$ be the Newton polygon of the polynomial $f_t(x)=\sum\limits_{p\in A}(q_p+v_pt^{\gamma(p)})x^p.$ Then the corresponding subset $W$ consists of all the points $p\in A$ such that the point $(p,\gamma(p))$ is a vertex of $N.$

\begin{lemma}\label{eq12}
	Let $\gamma\colon A\to \Z_{\geqslant 0}$ be a function and $W$ be the corresponding convex subsivision of the interval $\conv(A).$ Then, the following equalities hold:
	\begin{equation}\label{e12}
	\begin{cases}
	\chi(\mathcal A_1)+2|2\mathcal A_1|+2|\mathcal A_2|=-\area(N)\\
	|\mathcal A_2|=\area(N)-\gamma(w_0)-\gamma(w_k).
	\end{cases}
	\end{equation} 
\end{lemma}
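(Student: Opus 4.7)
The first equation I would derive via an Euler-characteristic additivity argument applied to the branched cover $\pi|_{\mathcal H}\colon\mathcal H\to\CC^2$. The generic fiber consists of $d := \length(\conv(A\cup\{0\}))$ points, fibers over $\mathcal A_1$ contain $d-1$ points, and fibers over each of $\mathcal A_2$ and $2\mathcal A_1$ contain $d-2$ points. By Assumption~\ref{gen_sing} these strata, together with the open smooth locus, exhaust the strata of codimension at most $2$ in the target. Since $\chi_c(\CC^2) = 0$, additivity applied to the target stratification gives
\[\chi_c(\mathcal H) = -\chi_c(\mathcal A_1) - 2(|\mathcal A_2| + |2\mathcal A_1|).\]
Identifying $\mathcal H$ with $\CC^2 \setminus \{f_t(x) = 0\}$ via the graph parametrization $(x,t) \mapsto (x, f_t(x), t)$ and invoking the Khovanskii formula $\chi_c(\{f_t(x) = 0\}) = -\area(N)$ for a generic curve in the two-torus with Newton polygon $N$, I obtain $\chi_c(\mathcal H) = \area(N)$. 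Substituting yields the first equation.

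For the second equation, I would express $|\mathcal A_2|$ as the number of solutions $(x,t) \in \CC^2$ of the system $f_t'(x) = f_t''(x) = 0$ with $f_t(x) \neq 0$: Assumption~\ref{gen_sing} guarantees that each such solution corresponds to a genuine triple root of $f_t(x) - y$ and that distinct solutions produce distinct points of $\mathcal A_2$. Writing $c_p(t) = q_p + v_p t^{\gamma(p)}$ for the coefficient of $x^p$ in $f_t$, the Newton polygons of $f_t'(x) = \sum_{p \in A} p \, c_p(t) x^{p-1}$ and $f_t''(x) = \sum_{p \in A} p(p-1) c_p(t) x^{p-2}$ as polynomials in $(x,t)$ are integer translates of $N$, so the Bernstein--Kushnirenko bound for the torus count equals $\MV(N,N) = \area(N)$. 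However, being derivatives of a common $f_t$, the polynomials $f_t'$ and $f_t''$ are far from generic: their leading and trailing terms in $x$ are proportional, sharing the factor $c_{w_k}(t)$ at the monomial $x^{w_k}$ (up to shift) and $c_{w_0}(t)$ at $x^{w_0}$. This forces the BKK count to include spurious solutions along the two vertical edges of $N$: the $\gamma(w_k)$ roots of $c_{w_k}(t)$ produce common zeros of $f_t'$ and $f_t''$ that escape to the toric divisor at $x^{w_k}$, and likewise the $\gamma(w_0)$ roots of $c_{w_0}(t)$ give spurious zeros at the opposite vertical edge. Subtracting these from the BKK count recovers the asserted formula.

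The main obstacle is precisely this boundary analysis for the second equation: one must verify, using Assumption~\ref{gen_sing} and the genericity of the coefficients $q_p, v_p$, that no further corrections arise from the non-vertical edges of $N$ or from coincidences between the two boundary contributions, and that each spurious solution at a vertical edge is counted with multiplicity exactly one. This amounts to inspecting the truncations of $f_t'$ and $f_t''$ to each edge of $N$ and confirming that only the two vertical edges carry common zeros.
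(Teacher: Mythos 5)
For the first equation your argument is essentially the paper's, differing only in how $\chi(\mathcal H)$ is evaluated: you parametrize $\mathcal H$ as $(\C^*)^2\setminus\{f_t(x)=0\}$ and apply the two--dimensional Kouchnirenko formula to the curve $\{f_t=0\}$, whereas the paper applies the three--dimensional Kouchnirenko--Bernstein--Khovanskii formula directly to the hypersurface $\mathcal H$ and notes that the lattice volume of the pyramid $\Delta$ equals $\area(N)$. Both give $\chi(\mathcal H)=\area(N)$, and the fiberwise Euler characteristic count over the target stratification is identical; as you also discovered, the number of points in a generic fiber drops out of the final identity, so its exact value is immaterial.

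For the second equation you and the paper genuinely diverge. The paper obtains $|\mathcal A_2|=\area(N)-\gamma(w_0)-\gamma(w_k)$ as an immediate consequence of the support--function formula for the secondary polytope in GKZ (Section~7.1D, Formula~(1.4)); you instead count solutions of $\{f_t'(x)=f_t''(x)=0\}$ in $(\C^*)^2$ by a BKK bound with boundary corrections. Your route can be made to work, but as written it has two defects. First, the assertion that the Newton polygons of $f_t'$ and $f_t''$ are both integer translates of $N$ is wrong in general: the coefficient of $x^{p-2}$ in $f_t''$ is $p(p-1)\,c_p(t)$, which vanishes identically when $p=1$; so if $1\in A$ and $(1,\gamma(1))$ is a vertex of $N$ (for instance if $1\in W$), the Newton polygon of $f_t''$ is strictly smaller than $N-(2,0)$ and your $\MV(N,N)=\area(N)$ bookkeeping breaks down. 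The clean fix is to replace $\partial_x$ by the logarithmic derivative $x\partial_x$: since $0\notin A$, both $(x\partial_x)f_t=\sum_p p\,c_p(t)x^p$ and $(x\partial_x)^2 f_t=\sum_p p^2 c_p(t)x^p$ have support exactly $A$ and Newton polygon exactly $N$, and their common zeros in $(\C^*)^2$ are the same as those of $f_t'$ and $f_t''$. Second, you explicitly defer the crux --- verifying that the only excess intersections live over the two vertical edges of $N$, that each contributes with multiplicity one, and that non--vertical edges contribute nothing --- and call it the main obstacle rather than resolving it. That is precisely the part of the argument that the paper avoids by invoking GKZ, so as submitted the proposal proves the first equation but leaves a real gap in the second.
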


\begin{proof}
The first equation follows from the Kouchnirenko--Bernstein--Khovanskii formula (see \cite{K1} for details). Indeed, the Euler characteristic of the hypersurface $\mathcal H\subset\CC^3$ is equal to the lattice volume of $\Delta$. On the other hand, let $D\subset\CC^2$ be the set of the critical values of the projection $\pi,$ that is, the union of the strata $\mathcal A_1,\mathcal A_2$ and $2\mathcal A_1$. Then, counting $\chi(\mathcal H)$ fiberwise and using the additivity property of the Euler characteristic, we obtain: 
	\begin{multline*}
	\chi(\mathcal H)=(w_k-w_0)\chi(\CC^2\setminus D)+(w_k-w_0-1)\chi(\mathcal A_1)+(w_k-w_0-2)\chi(\mathcal A_2)+(w_k-w_0-2)\chi(2\mathcal A_1)=\\(w_k-w_0)\chi(\CC^2)-\chi(\mathcal A_1)-2|\mathcal A_2|-2|2\mathcal A_1|=\\0-\chi(\mathcal A_1)-2|\mathcal A_2|-2|2\mathcal A_1|. 
	\end{multline*}	
	Thus, $\chi(\mathcal A_1)+2|\mathcal A_1|+2|\mathcal A_2|=-\vol(\Delta)=-\area(N),$ and this is the first of the desired equations.
	
	The second equation follows directly from the formula for the support function of the secondary polytope (see Formula (1.4) in Section 7.1D of \cite{GKZ}).
\end{proof}
\begin{exa}
	If $A=[1,n]\cap\Z$ and $\gamma$ is a concave function, then the equations (\ref{e12}) are of the following form: 
	$$
	\begin{cases}
	\chi(\mathcal A_1)+2|2\mathcal A_1|+|\mathcal A_2|=-\gamma(1)-\gamma(n)-\sum\limits_{m=2}^{n-1}2\gamma(m)\\
	|\mathcal A_2|=\sum\limits_{m=2}^{n-1}2\gamma(m).
	\end{cases}
	$$
\end{exa}

\subsection{Step 3: the third equation}

The equations (\ref{e12}) given by Lemma \ref{eq12} do not suffice to compute $|2\mathcal A_1|.$ To obtain the final equation, we will use the technique developed in \cite{V}.

Namely, let $\mathcal{C}\subset\CC^3$ be the complete intersection curve defined by the system \begin{equation}\label{def_C}
\{f_t(x)-y=x\frac{\partial f_t(x)}{\partial x}=0\}.
\end{equation} 
In other words, the curve $\mathcal C$ is the set of the critical points of the projection $\pi\colon \mathcal H\to\CC^2.$ For sufficiently generic polynomials $g_1,g_2$ with the same Newton polytope $\Delta,$ the Newton polygon of the projection of the complete intersection $\{g_1=g_2=0\}$ is equal to the fiber polytope $P=\int_{\pi}\Delta$ of $\Delta$ (see \cite{GP}, \cite{EK}). The coefficients in the system (\ref{def_C}) are related to each other, since the second equation is a partial derivative of the first one. So, we have to verify that the system (\ref{def_C}) satisfies certain genericity conditions. Namely, following Section 4.2 of \cite{GP}, we need to check that for every non-horizontal (e.g. not parallel to the plane of projection) edge $e\subset\Delta,$ the truncations of the equations in (\ref{def_C}) to $e$ do not have roots in $\CC.$ 

The non-horizontal edges $e\subset \Delta$ are of the following three types: 
\begin{itemize}
	\item[--] the edge $[(w_0,0,0),(w_k,0,0)];$ then the truncated system is $\{f_0=x\frac{\partial f_0}{\partial x}=0\},$ where $f_0=\sum\limits_{p\in A} q_px^p.$ 
	For generic $q_p$ this system does not have a root in $\CC.$
	\item[--] the edges $[(0,1,0),(w_0,0,0)],~[(0,1,0),(w_k,0,0)]$ and $[(0,1,0),(w_j,\gamma(w_j))],0\leqslant j\leqslant k;$ the second polynomial in the truncated system is a monomial, so, the system has no roots in $\CC.$
	\item[--] the edges $[(w_j,0,\gamma(w_j)),(w_{j+1},0,\gamma(w_{j+1}))],~0\leqslant j\leqslant k-1;$
	the truncated system is $\{x^{w_j}t^{\gamma(w_j)}+x^{w_{j+1}}t^{\gamma(w_{j+1})}=w_jx^{w_j}t^{\gamma(w_j)}+w_{j+1}x^{w_{j+1}}t^{\gamma(w_{j+1})}=0\},$ which implies the equation $(w_{j+1}-w_j)x^{w_{j+1}}t^{\gamma(w_{j+1})}=0.$ Thus the system has no roots in $\CC.$ 
\end{itemize}

So, the Newton polytope of the curve $D$ is indeed the fiber polygon $P=\int_{\pi}\Delta.$ 
We can find the desired third relation by comparing the Euler characteristic $\chi(D)$ with the one of its generic perturbation which is a smooth Newton non-degenerate curve $Y$ with the same Newton polygon $P.$ The latter is known to be equal to $-\vol(P),$ by \cite{K1}. 

Let us note that the complete intersection $\mathcal C$ is degenerate with respect to the polytope $\Delta,$ because its closure intersects two one-dimensional orbits of the toric variety $X_{\Delta}.$ Those are precisely the orbits corresponding to the edges $[(w_0,0,0),(w_0,0,\gamma(w_0))]$ and $[(w_k,0,0),(w_k,0,\gamma(w_k))].$ So, we will have to deal with the contribution of the corresponding intersection points to the Euler characteristic of the curve $D.$ 

One can easily check that the closure of the curve $\mathcal C$ transversally intersects the $2$-dimensional orbits of the toric variety $X_{\Delta}$ corresponding to the facets of $\Delta$ not containing the edges $[(w_0,0,0),(w_0,0,\gamma(w_0))]$ and $[(w_k,0,0),(w_k,0,\gamma(w_k))].$ The singularities of the closure of the curve $D$ arising from these facets as well as their contribution to the Euler characteristic of the curve $D$ were studied in \cite{V} (see Section 2 for an overview).

Thus, the closure of the curve $D$ in the toric surface $X_P$ has the following 4 types of singularities: 

\begin{itemize}
	\item the set $\mathcal S_1$ consisting of $|2\mathcal A_1|$ transversal self-intersections;
	\item the set $\mathcal S_2$ consisting of $|\mathcal A_2|$ simple cusps; 
	\item the set  $\mathcal S_{hor}$ of the singularities arising from the horizontal edges of $\Delta;$
	\item the set $\mathrm{FPS}$ (from {\it forking--path singularities}) arising from the facets of $\Delta.$
\end{itemize}

Using this information, we can prove the following statement. 
\begin{utver}\label{Rel3}
	In the same notation as above, we have the following equality: 
	\begin{multline}\label{rel3}
	\chi(\mathcal A_1)-|\mathcal A_2|=\\=-\vol(P)-\sum_{s\in \mathcal S_{hor}}\chi([\text{Milnor fiber of~} s]\cap\CC^2)-\sum_{s\in \mathrm{FPS}}\chi([\text{Milnor fiber of~} s]\cap\CC^2).
	\end{multline}
\end{utver}
\begin{proof}
	Indeed, by additivity of Euler characteristic, we have the following equality: 
	\begin{multline}\label{comp_chi1}
	\chi(Y)=\chi(D)-|\mathcal A_2|-|2\mathcal A_1|+\sum_{s\in\mathcal S_1} (b(s)-2\delta(s))+\sum_{s\in\mathcal S_2} (b(s)-2\delta(s))+\\+\sum_{s\in\mathcal S_{hor}}\chi([\text{Milnor fiber of~} s]\cap\CC^2)+\sum_{s\in\mathrm{FPS}}\chi([\text{Milnor fiber of~} s]\cap\CC^2),
	\end{multline}
	where $b(s)$ is the number of branches passing through the singular point $s$ and $\delta(s)$ is the $\delta$-invariant of $s$.
	
	Formula (\ref{comp_chi1}) can be interpreted as follows: we puncture small neighborhoods of the points in $\mathcal S_1\cup\mathcal S_2$ and replace them with the Milnor fibers of the corresponding singularities. By Milnor formula, the latter are of Euler characteristic $b(s)-\delta(s).$ Finally, we add the Milnor fibers of the singularities in $\mathrm{FPS}\cup\mathcal S_{hor}.$
	
	Substituting the values $b(s)$ and $\delta(s)$ for cusps and nodes and using the equality $\chi(D)=\chi(\mathcal A_1)+|2\mathcal A_1|+|\mathcal A_2|$, we obtain the following equality: 
	\begin{multline}\label{comp_chi2}
	\chi(Y)=\chi(\mathcal A_1)+|2\mathcal A_1|+|\mathcal A_2|-|\mathcal A_2|-|2\mathcal A_1|+|2\mathcal A_1|(2-2)+|\mathcal A_2|(1-2)+\\+\sum_{s\in \mathcal S_{hor}}\chi([\text{Milnor fiber of~} s]\cap\CC^2)+\sum_{s\in \mathrm{FPS}}\chi([\text{Milnor fiber of~} s]\cap\CC^2)=\\=
	\chi(\mathcal A_1)-|\mathcal A_2|+\sum_{s\in \mathcal S_{hor}}\chi([\text{Milnor fiber of~} s]\cap\CC^2)+\sum_{s\in \mathrm{FPS}}\chi([\text{Milnor fiber of~} s]\cap\CC^2)
	\end{multline}
	Finally, by \cite{K1}, we have $\chi(Y)=-\vol(P).$ After using this equality and rearranging the terms in (\ref{comp_chi2}), we obtain the desired equation. 
\end{proof}

The equations in (\ref{e12}) combined with (\ref{rel3}) suffice to compute $|2\mathcal A_1|.$ So, our next goal is to compute the right-hand side of (\ref{rel3}) in terms of $\gamma.$ Moreover, from \cite{V}, it follows that unlike the first two equations, the third one does not only employ the terms $\gamma(p)$ such that $(p,0,\gamma(p))$ is a vertex of $\Delta$, but also the other ones, because so do the formulas for the Euler characteristic of the Milnor fiber of the singularities $s\in\mathrm{FPS}.$ In other words, to find $|2\mathcal A_1|,$ one needs more information than the convex subdivision of the interval $\conv(A)$ defined by the covector $\gamma.$ 
\subsection{Step 4: The area of the fiber polygon \texorpdfstring{$P$}{P}}
This subsection is devoted to computing the first of the three summands in the right-hand side of (\ref{rel3}), namely, the area of the polygon $P=\int_{\pi}\Delta.$ Let $\gamma\colon A\to\Z_{\geqslant 0}$ be a function satisfying Assumptions \ref{slopes} and \ref{roots}, and let $W=\{w_0,\ldots,w_k\}\subset A$ and $Z=(z_1,\ldots,z_k)$ be the corresponding pieces of combinatorial data introduced in Definition \ref{def_comb_data}.

Let us first describe the polytope $P$. The polytope $\Delta$ that we start with is the convex hull of the set $\{(0,1,0)\}\cup\{(a_0,0,0)\}\cup\{(a_{|A|-1},0,0)\}\cup\{\big(p,0,\gamma(p)\big)\mid p\in A\}\subset\Z^3.$ 

\begin{exa}\label{Delta}
Consider the set $A=\{-3,-1,1,2,4\}$ and $\gamma=(3,5,2,5,1)$ from Example \ref{comb_data}. We have $W=\{-3,-1,2,4\}$ and $Z=(1,0,2).$ The polytope $\Delta$ is shown in Figure 5 below. 
\begin{center}
\begin{tikzpicture}[scale=0.45]
\draw[thick,gray,->] (7,-2)--(16,16);
\draw[thick, gray,dashed] (11,6)--(1,11);
\draw[thick, gray,->] (1,11)--(-1,12);
\draw[thick,gray,->] (11,6)--(9,12);
\draw[thick] (8,0)--(15,14);
\draw[thick] (8,0)--(2,3);
\draw[thick] (2,3)--(0,9);
\draw[thick] (0,9)--(3,15);
\draw[thick] (3,15)--(13,15);
\draw[thick] (15,14)--(13,15);
\draw[thick] (10,9)--(15,14);
\draw[thick] (10,9)--(13,15);
\draw[thick] (10,9)--(3,15);
\draw[thick] (10,9)--(0,9);
\draw[thick] (10,9)--(2,3);
\draw[thick] (10,9)--(8,0);
\filldraw[color=black, fill=red!70, thick](8,0) circle (0.2);
\filldraw[color=black, fill=red!70, thick](10,4) circle (0.2);
\filldraw[color=black, fill=red!70, thick](13,10) circle (0.2);
\filldraw[color=black, fill=red!70, thick](15,14) circle (0.2);
\filldraw[color=black, fill=black, thick](11,6) circle (0.2);
\filldraw[color=black, fill=black, thick](10,9) circle (0.2);
\filldraw[color=black, fill=white, thick](12,8) circle (0.2);
\node[right,scale=0.75] at (8,0) {$(-3,0,0)$};
\node[right,scale=0.75] at (10,4) {$(-1,0,0)$};
\node[right,scale=0.75] at (11,6) {$(0,0,0)$};
\node[right,scale=0.75] at (13,10) {$(2,0,0)$};
\node[right,scale=0.75] at (15,14) {$(4,0,0)$};
\node[right,scale=0.75] at (10,9) {$(0,1,0)$};
\node[left,scale=0.75] at (2,3) {$(-3,0,3)$};
\node[left,scale=0.75] at (0,9) {$(-1,0,5)$};
\node[above,scale=0.75] at (3,15) {$(2,0,5)$};
\node[above,scale=0.75] at (13,15) {$(4,0,1)$};
\node[right,gray,scale=0.75] at (16,16) {$e_1$};
\node[right,gray,scale=0.75] at (9,12) {$e_2$};
\node[above,gray,scale=0.75] at (-1,12) {$e_3$};
\node[below] at (8,-2) {Figure 5. The polytope $\Delta.$};
\end{tikzpicture}
\end{center}
\end{exa}
The projection that we are interested in forgets the first coordinate. The points of the fiber polytope $P$ are integrals of monotone paths in the polytope $\Delta$ connecting the upper and the lower horizontal faces of $\Delta.$ The vertices of $P$ are in 1-to-1 correspondence with monotone paths going along the edges of the polytope $\Delta$ arising from boundaries of projections of the polytope $\Delta$ along the directions parallel to the plane containing the fiber polytope $P.$ 

\begin{center}
	\begin{tikzpicture}[scale=0.3]
	\draw[thick,gray,->] (7,-2)--(16,16);
	\draw[thick, gray,dashed] (11,6)--(1,11);
	\draw[thick, gray,->] (1,11)--(-1,12);
	\draw[thick,gray,->] (11,6)--(9,12);
	\draw[thick] (8,0)--(15,14);
	\draw[thick] (8,0)--(2,3);
	\draw[thick] (2,3)--(0,9);
	\draw[thick] (0,9)--(3,15);
	\draw[thick] (3,15)--(13,15);
	\draw[thick] (15,14)--(13,15);
	\draw[thick] (10,9)--(15,14);
	\draw[thick] (10,9)--(13,15);
	\draw[thick] (10,9)--(3,15);
	\draw[thick] (10,9)--(0,9);
	\draw[thick] (10,9)--(2,3);
	\draw[thick] (10,9)--(8,0);
    \draw[ultra thick, violet] (13,15)--(3,15)--(0,9)--(2,3);
	\draw[ultra thick, orange] (15,14)--(10,9)--(8,0);
	\draw[ultra thick,cyan] (8,0)--(15,14);
	\draw[ultra thick, teal] (13,15)--(10,9)--(2,3);
	\filldraw[color=black, fill=black, thick](13,15) circle (0.2);
	\filldraw[color=black, fill=black, thick](3,15) circle (0.2);
	\filldraw[color=black, fill=black, thick](0,9) circle (0.2);
	\filldraw[color=black, fill=black, thick](2,3) circle (0.2);
	\filldraw[color=black, fill=red!70, thick](8,0) circle (0.3);
	\filldraw[color=black, fill=red!70, thick](10,4) circle (0.3);
	\filldraw[color=black, fill=red!70, thick](13,10) circle (0.3);
	\filldraw[color=black, fill=red!70, thick](15,14) circle (0.3);
	\filldraw[color=black, fill=black, thick](11,6) circle (0.3);
	\filldraw[color=black, fill=black, thick](10,9) circle (0.3);
	\filldraw[color=black, fill=white, thick](12,8) circle (0.3);
	\node[right,gray,scale=0.8] at (16,16) {$e_1$};
	\node[right,gray,scale=0.8] at (9,12) {$e_2$};
	\node[above,gray,scale=0.8] at (-1,12) {$e_3$};
	\draw[thick,gray,->] (21,-1)--(21,15);
	\draw[thick, gray, ->] (22,6)--(17,6);
	\draw[ultra thick, cyan] (21,0)--(21,14);
	\draw[ultra thick, orange] (21,14)--(18,6)--(21,0);
	\filldraw[color=black, fill=red!70, thick](21,0) circle (0.25);
	\filldraw[color=black, fill=red!70, thick](21,4) circle (0.25);
	\filldraw[color=black, fill=red!70, thick](21,10) circle (0.25);
	\filldraw[color=black, fill=red!70, thick](21,14) circle (0.25);
	\filldraw[color=black, fill=black, thick](21,6) circle (0.25);
	\filldraw[color=black, fill=black, thick](18,6) circle (0.25);
	\filldraw[color=black, fill=black, thick](21,6) circle (0.25);
	\filldraw[color=black, fill=white, thick](21,8) circle (0.25);
	\node[right,gray,scale=0.8] at (21,15) {$e_1$};
	\node[left,gray,scale=0.8] at (17,6) {$e_2$};
	\draw[thick,gray,->] (27,-1)--(27,15);
	\draw[thick, gray, ->] (28,6.25)--(24,5.25);
	\draw[thick, gray, dashed] (28,6)--(25,6);
	\draw[thick, gray] (28,6)--(27,6);
	\draw[thick, gray, ->] (25,6)--(23.5,6);
	\draw[thick] (27,0)--(27,14);
	\draw[thick] (27,14)--(25,5.25)--(27,0);
	\draw[thick] (27,14)--(26,14);
	\draw[thick] (27,0)--(26,0);
	\draw[ultra thick, teal] (26,14)--(25,5.5)--(26,0);
	\filldraw[color=black, fill=black, thick](26,14) circle (0.15);
	\filldraw[color=black, fill=black, thick](26,0) circle (0.15);
	\filldraw[color=black, fill=red!70, thick](27,0) circle (0.25);
	\filldraw[color=black, fill=red!70, thick](27,4) circle (0.25);
	\filldraw[color=black, fill=red!70, thick](27,10) circle (0.25);
	\filldraw[color=black, fill=red!70, thick](27,14) circle (0.25);
	\filldraw[color=black, fill=black, thick](25,5.5) circle (0.25);
	\filldraw[color=black, fill=black, thick](27,6) circle (0.25);
	\filldraw[color=black, fill=white, thick](27,8) circle (0.25);
	\node[right,gray,scale=0.8] at (27,15) {$e_1$};
	\node[below left,gray,scale=0.8] at (24.3,5.25) {$e_2$};
	\node[above,gray,scale=0.8] at (23.5,6) {$e_3$};
    \end{tikzpicture}
    \begin{tikzpicture}[scale=0.3]
    \draw[thick,gray,->] (7,-2)--(16,16);
    \draw[thick, gray,dashed] (11,6)--(1,11);
    \draw[thick, gray,->] (1,11)--(-1,12);
    \draw[thick,gray,->] (11,6)--(9,12);
    \draw[thick] (8,0)--(15,14);
    \draw[thick] (8,0)--(2,3);
    \draw[thick] (2,3)--(0,9);
    \draw[thick] (0,9)--(3,15);
    \draw[thick] (3,15)--(13,15);
    \draw[thick] (15,14)--(13,15);
    \draw[thick] (10,9)--(15,14);
    \draw[thick] (10,9)--(13,15);
    \draw[thick] (10,9)--(3,15);
    \draw[thick] (10,9)--(0,9);
    \draw[thick] (10,9)--(2,3);
    \draw[thick] (10,9)--(8,0);
    \draw[ultra thick, magenta] (13,14.9)--(3.15,14.9)--(10,9.15)--(10,9)--(2,3);
    \draw[ultra thick, blue] (13,15.1)--(3.1,15.1)--(3,14.8)--(10,8.9)--(10,9)--(0,9)--(2,3);
   \filldraw[color=black, fill=black, thick](13,15) circle (0.2);
   \filldraw[color=black, fill=black, thick](3,15) circle (0.2);
   \filldraw[color=black, fill=black, thick](0,9) circle (0.2);
   \filldraw[color=black, fill=black, thick](2,3) circle (0.2);
    \filldraw[color=black, fill=red!70, thick](8,0) circle (0.3);
    \filldraw[color=black, fill=red!70, thick](10,4) circle (0.3);
    \filldraw[color=black, fill=red!70, thick](13,10) circle (0.3);
    \filldraw[color=black, fill=red!70, thick](15,14) circle (0.3);
    \filldraw[color=black, fill=black, thick](11,6) circle (0.3);
    \filldraw[color=black, fill=black, thick](10,9) circle (0.3);
    \filldraw[color=black, fill=white, thick](12,8) circle (0.3);
    \node[right,gray,scale=0.8] at (16,16) {$e_1$};
    \node[right,gray,scale=0.8] at (9,12) {$e_2$};
    \node[above,gray,scale=0.8] at (-1,12) {$e_3$};
    \draw[thick,gray,->] (25,-1)--(25,15);
    \draw[thick, gray, ->] (22,6)--(19,6);
    \draw[thick, gray] (28,6)--(25,6);
    \draw[thick, gray,->] (28,8)--(19,2);
    \draw[thick, gray, dashed] (25,6)--(22,6);
    \draw[thick] (25,0)--(25,14);
    \draw[thick] (23,13)--(22,4)--(23,-1);
    \draw[thick] (25,14)--(22,4)--(25,0);
    \draw[thick] (25,14)--(23,13);
    \draw[thick] (25,0)--(23,-1);
    \draw[ultra thick, magenta] (23,13)--(22,7.5)--(22,4)--(23,-1);
    \filldraw[color=black, fill=black, thick](23,13) circle (0.15);
    \filldraw[color=black, fill=black, thick](23,-1) circle (0.15);
     \filldraw[color=black, fill=black, thick](22,7.5) circle (0.15);
    \filldraw[color=black, fill=red!70, thick](25,0) circle (0.25);
    \filldraw[color=black, fill=red!70, thick](25,4) circle (0.25);
    \filldraw[color=black, fill=red!70, thick](25,10) circle (0.25);
    \filldraw[color=black, fill=red!70, thick](25,14) circle (0.25);
    \filldraw[color=black, fill=black, thick](22,4) circle (0.25);
    \filldraw[color=black, fill=black, thick](25,6) circle (0.25);
    \filldraw[color=black, fill=white, thick](25,8) circle (0.25);
    \node[right,gray,scale=0.8] at (25,15) {$e_1$};
    \node[below,gray,scale=0.8] at (19,2) {$e_2$};
    \node[above,gray,scale=0.8] at (19,6) {$e_3$};
    \draw[thick,gray,->] (38,-1)--(38,15);
    \draw[thick, gray, ->] (42,7)--(30,4);
    \draw[thick, gray] (42,6)--(38,6);
    \draw[thick, gray, dashed] (38,6)--(34,6);
    \draw[thick, gray,->] (34,6)--(30,6);
    \draw[thick] (38,0)--(38,14);    
    \draw[thick] (36,13.5)--(34,5)--(35,-0.25);
    \draw[thick] (38,14)--(34,5)--(38,0);
    \draw[thick] (38,14)--(36,13.5);
    \draw[thick] (38,0)--(35,-0.25);
    \draw[ultra thick, blue] (36,13.5)--(34,9)--(34,5)--(34,2)--(35,-0.25);
    \filldraw[color=black, fill=black, thick](36,13.5) circle (0.15);
    \filldraw[color=black, fill=black, thick](34,9) circle (0.15);
    \filldraw[color=black, fill=black, thick](35,-0.25) circle (0.15);
    \filldraw[color=black, fill=black, thick](34,2) circle (0.15);
    \filldraw[color=black, fill=red!70, thick](38,0) circle (0.25);
    \filldraw[color=black, fill=red!70, thick](38,4) circle (0.25);
    \filldraw[color=black, fill=red!70, thick](38,10) circle (0.25);
    \filldraw[color=black, fill=red!70, thick](38,14) circle (0.25);
    \filldraw[color=black, fill=black, thick](34,5) circle (0.25);
    \filldraw[color=black, fill=black, thick](38,6) circle (0.25);
    \filldraw[color=black, fill=white, thick](38,8) circle (0.25);
    \node[right,gray,scale=0.8] at (25,15) {$e_1$};
    \node[below,gray,scale=0.8] at (30,4) {$e_2$};
    \node[above,gray,scale=0.8] at (30,6) {$e_3$};
    \node[below] at (17,-2) {Figure 6. Describing the vertices of $\Delta.$};
    \end{tikzpicture}
\end{center}
\begin{exa}\label{path_vert_exa}
	Let us describe the paths corresponding to the vertices of the fiber polygon $P$ of the polytope $\Delta$ from Example \ref{Delta}. All these paths are shown in Figure 6. Each of these paths is drawn in colour on the polytope $\Delta.$ The most trivial example is the path shown in purple. It goes along the edges of the polygon $N,$ the base of the pyramid $\Delta,$ and can be seen on the boundary of the image under the projection forgetting the second coordinate. All the other paths are shown in color together with the correponding projections of $\Delta.$ 
\end{exa}

To compute the coordinates of the point in $P$ associated to the given path $$\Psi(s)=(s,\psi_1(s),\psi_2(s)),$$ we project this path onto the planes $\{e_2=0\}$ and $\{e_3=0\}$ and compute the definite integrals of the resulting functions in $s$. 

\begin{exa}\label{vert_coord_exa}
Let us find the vertex of $P$ corresponding to one of the paths shown in Example \ref{path_vert_exa}. We will work with the path drawn in blue in Figure 6. The coordinates of this vertex are exactly the areas of the regions shown below in Figure 7. The area of the first region is equal to the area of the base $N\subset\Delta$ minus the area of the triangle $\conv(\{(0,0),(w_1,\gamma(w_1)),(w_2,\gamma(w_2))\}).$ Thus, the first coordinate of the sought vertex is equal to: 
\begin{multline*}
(w_1-w_0)\gamma(w_0)+(w_2-w_0)\gamma(w_1)+(w_3-w_1)\gamma(w_2)+(w_3-w_2)\gamma(w_3)-(w_2\gamma(w_1)-w_1\gamma(w_2))\\=(w_1-w_0)\gamma(w_0)-w_0\gamma(w_1)+w_3\gamma(w_2)+(w_3-w_1)\gamma(w_3).
\end{multline*}
The area of the second region is equal to $w_2-w_1.$ 

\begin{center}
\begin{tikzpicture}[scale=0.6]
\draw[thick, gray, ->] (3,-2)--(3,6);
\draw[thick, gray, ->] (-1,0)--(8,0);
\filldraw[color=white, fill=blue!20, thick](0,0)--(0,3)--(2,5)--(3,0)--(5,5)--(7,1)--(7,0)--(0,0);
\draw[thick] (0,0)--(7,0);
\draw[thick] (0,0)--(0,3)--(2,5)--(5,5)--(7,1)--(7,0);
\draw[ultra thick, blue] (0,3)--(2,5)--(3,0)--(5,5)--(7,1);
\filldraw[color=black, fill=black, thick](0,3) circle (0.1);
\filldraw[color=black, fill=black, thick](2,5) circle (0.1);
\filldraw[color=black, fill=black, thick](5,5) circle (0.1);
\filldraw[color=black, fill=black, thick](7,1) circle (0.1);
\filldraw[color=black, fill=red!70, thick](0,0) circle (0.15);
\filldraw[color=black, fill=red!70, thick](2,0) circle (0.15);
\filldraw[color=black, fill=red!70, thick](5,0) circle (0.15);
\filldraw[color=black, fill=red!70, thick](7,0) circle (0.15);
\filldraw[color=black, fill=black, thick](3,0) circle (0.15);
\filldraw[color=black, fill=white, thick](4,0) circle (0.15);
\node[right,gray,scale=0.8] at (3,6) {$e_3$};
\node[below,gray,scale=0.8] at (8,0) {$e_1$};
\draw[thick, gray, ->] (9,0)--(18,0);
\filldraw[color=white, fill=blue!20, thick](12,0)--(13,3)--(15,0)--(17,0)--(12,0);
\draw[thick, gray, ->] (13,-2)--(13,4);
\draw[thick] (10,0)--(17,0);
\draw[thick] (10,0)--(13,3)--(17,0);
\draw[ultra thick, blue] (10,0)--(12,0)--(13,3)--(15,0)--(17,0);
\filldraw[color=black, fill=black, thick](13,3) circle (0.1);
\filldraw[color=black, fill=red!70, thick](10,0) circle (0.15);
\filldraw[color=black, fill=red!70, thick](12,0) circle (0.15);
\filldraw[color=black, fill=red!70, thick](15,0) circle (0.15);
\filldraw[color=black, fill=red!70, thick](17,0) circle (0.15);
\filldraw[color=black, fill=black, thick](13,0) circle (0.15);
\filldraw[color=black, fill=white, thick](14,0) circle (0.15);
\node[right,gray,scale=0.8] at (13,4) {$e_2$};
\node[below,gray,scale=0.8] at (18,0) {$e_1$};
\node[right, scale=0.7] at (13.2,3) {$(0,1)$};
\node[below] at (8,-2) {Figure 7. The projections of the path $\Psi(s).$};
\end{tikzpicture}
\end{center}
\end{exa}

To describe the polygon $P$ for a covector $\gamma,$ we need to know the subdivision $W=(w_0,\ldots,w_k)\subset A,$ and the sequence $Z=(z_1,\ldots,z_k)$ (see Definition \ref{def_comb_data}). Recall that by $N$ we denote the base of the pyramid $\Delta.$ Let us introduce some notation. For $0\leqslant i<k,$ denote by $T_i$ the triangle $\conv(\{(0,0),(w_i,\gamma(w_i)),(w_{i+1},\gamma(w_{i+1}))\}),$ and set $d_i=w_{i+1}-w_i.$ The (oriented) lattice area $S_i=\area(T_i)$ is equal to $w_{i+1}\gamma(w_i)-w_i\gamma(w_{i+1}).$ We define the sequence $R_i, 0\leqslant i\leqslant k$ of sets as follows: 
$$R_0=\begin{cases}
N\cup\conv(\{(0,0),(w_0,\gamma(w_0)),(w_0,0)\}), \text{~if~}|w_0|<|w_k|,\\
N\cup\conv(\{(0,0),(w_k,\gamma(w_k)),(w_k,0)\}), \text{~if~}|w_0|>|w_k|,
\end{cases}$$
and for $i>0,$ we have $R_{i}=R_{i-1}\ominus T_{z_i},$ where $\ominus$ stands for the symmetric difference of sets, i.e., $E\ominus F=(E\cup F)\setminus(E\cap F).$

\begin{utver}\label{descr_P}
In the same notation as above, the fiber polygon $P$ is a union of rectangular trapezoids with bases $\Sigma_0,\ldots,\Sigma_k,$ sitting on top of each other. We have that $\length(\Sigma_i)=\area(R_i),$ and the distance between $\Sigma_{i+1}$ and $\Sigma_{i}$ is equal to $d_{z_i}.$
\end{utver}

\begin{exa}
Let us construct the fiber polytope for the set $A=\{-3,-1,1,2,4\}$ and the covector $\gamma=(3,5,5,2,1)$ from Examples \ref{Delta},\ref{path_vert_exa} and \ref{vert_coord_exa}. We have $W=\{w_0,w_1,w_2,w_3\}=\{-3,-1,2,4\}$ and $Z=(1,0,2).$ The triangles $T_i$ are shown in the Figure 8 below, as well as the sought fiber polytope $P.$ Each of the vertices of $P$ is shown in the same color as the corresponding path shown in Example \ref{path_vert_exa}. Using the definitions above, we obtain: 
 
$$\begin{cases}
R_0=N\\
R_1=R_0\ominus T_1=R_0\setminus T_1\\
R_2=R_1\ominus T_0=R_1\setminus T_0\\
R_3=R_2\ominus T_2=R_2\setminus T_2.
\end{cases}
$$
The distances between the bases $\Sigma_0,\Sigma_1,\Sigma_2,\Sigma_3$ are equal to $w_2-w_1,~w_1-w_0$ and $w_3-w_2$ respectively. 
Thus, we have:  
\begin{align*}\begin{cases}
\length(\Sigma_0)=\area(R_0)&=(w_1-w_0)\gamma(w_0)+(w_2-w_0)\gamma(w_1)+\\& \hspace{16ex}+(w_3-w_1)\gamma(w_2)+(w_3-w_2)\gamma(w_3)\\
\length(\Sigma_1)=\area(R_1)&=(w_1-w_0)\gamma(w_0)-w_0\gamma(w_1)+w_3\gamma(w_2)+(w_3-w_2)\gamma(w_3)\\
\length(\Sigma_2)=\area(R_2)&=-w_0\gamma(w_0)+w_3\gamma(w_2)+(w_3-w_2)\gamma(w_3)\\
\length(\Sigma_3)=\area(R_3)&=-w_0\gamma(w_0)+w_3\gamma(w_3).
\end{cases}\end{align*}
\begin{center}
\begin{tikzpicture}[scale=0.6]
\draw[thick, gray, ->] (3,-2)--(3,6);
\draw[thick, gray, ->] (-1,0)--(8,0);
\draw[thick] (0,0)--(7,0);
\draw[thick] (0,0)--(0,3)--(2,5)--(5,5)--(7,1)--(7,0);
\draw[ultra thick] (3,0)--(0,3);
\draw[ultra thick] (3,0)--(2,5);
\draw[ultra thick] (3,0)--(5,5);
\draw[ultra thick] (3,0)--(7,1);
\filldraw[color=black, fill=black, thick](0,3) circle (0.1);
\filldraw[color=black, fill=black, thick](2,5) circle (0.1);
\filldraw[color=black, fill=black, thick](5,5) circle (0.1);
\filldraw[color=black, fill=black, thick](7,1) circle (0.1);
\filldraw[color=black, fill=red!70, thick](0,0) circle (0.15);
\filldraw[color=black, fill=red!70, thick](2,0) circle (0.15);
\filldraw[color=black, fill=red!70, thick](5,0) circle (0.15);
\filldraw[color=black, fill=red!70, thick](7,0) circle (0.15);
\filldraw[color=black, fill=black, thick](3,0) circle (0.15);
\filldraw[color=black, fill=white, thick](4,0) circle (0.15);
\node[right,gray,scale=0.8] at (3,6) {$e_3$};
\node[below,gray,scale=0.8] at (8,0) {$e_1$};
\node[right, scale=0.8] at (3,4) {$T_1$};
\node[right, scale=0.8] at (1,3) {$T_0$};
\node[right, scale=0.8] at (5,2) {$T_2$};
\draw[thick] (9.5,0)--(24,0);
\draw[thick] (13.25,1.5)--(24,1.5);
\draw[thick] (16.25,2.5)--(24,2.5);
\draw[thick] (20.75,3.5)--(24,3.5);
\draw[thick] (9.5,0)--(13.25,1.5)--(16.25,2.5)--(20.75,3.5);
\draw[thick] (24,3.5)--(24,0);
\filldraw[color=black, fill=magenta, thick](16.25,2.5) circle (0.25);
\filldraw[color=black, fill=blue, thick](13.25,1.5) circle (0.25);
\filldraw[color=black, fill=violet, thick](9.5,0) circle (0.25);
\filldraw[color=black, fill=teal, thick](20.75,3.5) circle (0.25);
\filldraw[color=black, fill=cyan, thick](24,0) circle (0.25);
\filldraw[color=black, fill=orange, thick](24,3.5) circle (0.25);
\node[below] at (15,0) {$\Sigma_0$};
\node[below] at (17,1.5) {$\Sigma_1$};
\node[below] at (19,2.5) {$\Sigma_2$};
\node[above] at (22,3.5) {$\Sigma_3$};
\node[left] at (11.5,1) {$E_1$};
\node[below] at (14.1,2.8) {$E_0$};
\node[above] at (18.3,2.9) {$E_2$};
\node[below] at (12,-2) {Figure 8. The triangles $T_i$ and the fiber polygon $P.$};
\end{tikzpicture}
\end{center}
\end{exa}

\begin{proof}
First, suppose that the set $A$ is such that $0\in\conv(A).$ One can immediately find the following $4$ vertices of the sought polygon $P:$
\begin{enumerate}
\item the vertex $\big(w_k\gamma(w_k)-w_0\gamma(w_0),w_k-w_0\big)$ associated with the path from $(w_k,0,\gamma(w_k))$ to $(w_0,0,\gamma(w_0))$ along the edges $[\big(w_{k},0,\gamma(w_{k})\big),(0,1,0)]$ and 
$[(0,1,0), \big(w_{0},0,\gamma(w_{0})\big)]$ of the polytope $\Delta.$
\item the vertex $(0,0),$ associated with the path along the edge of $\Delta$ connecting $(w_k,0,0)$ and $(w_0,0,0);$ 
\item the vertex $(0,w_k-w_0)$ associated with the path from $(w_k,0,0)$ to $(w_0,0,0)$ along the edges $[(w_k,0,0),(0,1,0)]$ and $[(0,1,0),(w_0,0,0)]$ of the polytope $\Delta;$
\item the vertex $(\area(N),0)$ arising from the path going from $(w_k,0,\gamma(w_k))$ to $(w_0,0,\gamma(w_0))$ along the edges $[\big(w_{j+1},0,\gamma(w_{j+1})\big),\big(w_j,0,\gamma(w_j)\big)]$ of $N.$
\end{enumerate}
So, we have found $3$ edges of $P$: the edge $\Sigma_0$ connecting the first vertex in the list above with the second, the edge $\Sigma_k$ connecting the third vertex with the fourth and the edge connecting the second vertex with the third. Since the edges of the fiber polytope $P$ are Minkowski integrals of the faces of $\Delta,$ we conclude that $P$ has $k$ more edges $E_0,\ldots,E_{k-1}$ for $k$ facets of the form $\conv(\{(0,1,0),(w_j,0,\gamma(w_j)),(w_{j+1},0,\gamma(w_{j+1}))\}),~0\leqslant j<k.$ The edge $E_j$ is thus a shifted copy of the interval $[(0,0),\big(w_{j}\gamma(w_{j+1})-w_{j+1}\gamma(w_j),w_{j+1}-w_j\big)].$ 

Therefore, the only question that remains is in which order these $k$ edges are placed in $P.$ We will now show that this order is encoded by the sequence $Z=(z_1,\ldots,z_k)$ (see Definition \ref{def_comb_data}.) The slopes the edges $E_j$ should be placed in $P$ in the order respecting convexity of $P.$ Namely, their slopes should decrease as we move in $P$ from bottom (the edge $\Sigma_0$) to top (the edge $\Sigma_k$). Therefore, if the edge $E_i$ should be placed somewhere below the edge $E_j,$ then we have the following inequality: 

\begin{equation*}
\dfrac{w_{i}\gamma(w_{i+1})-w_{i+1}\gamma(w_i)}{w_{i+1}-w_i}>\dfrac{w_{j}\gamma(w_{j+1})-w_{j+1}\gamma(w_j)}{w_{j+1}-w_j}. 
\end{equation*}
Multiplying both sides by $-1,$ we obtain: 
\begin{equation*}
\dfrac{w_{i+1}\gamma(w_i)-w_{i}\gamma(w_{i+1})}{w_{i+1}-w_i}<\dfrac{w_{j+1}\gamma(w_j)-w_{j}\gamma(w_{j+1})}{w_{j+1}-w_j}. 
\end{equation*}
Finally, we note that the expressions in both sides of the inequality can be rewritten in a more familiar way: 
\begin{equation*}
w_i\cdot\dfrac{\gamma(w_i)-\gamma(w_{i+1})}{w_{i+1}-w_i}+\gamma(w_i)<w_j\cdot\dfrac{\gamma(w_j)-\gamma(w_{j+1})}{w_{j+1}-w_j}+\gamma(w_j),  
\end{equation*}
or, in the notation of Section \ref{answer}, we have: 

\begin{equation*}
\varphi_{\gamma}(r_i)=w_i\cdot r_i+\gamma(w_i)<w_j\cdot r_j+\gamma(w_j)=\varphi_{\gamma}(r_j).  
\end{equation*}
By definition of the sequence $Z,$ the latter means exactly that $i$ occurs earlier than $j$ in $Z.$

Recall that the edges $E_j$ are shifted copies of intervals of the form $$[(0,0),\big(w_{j}\gamma(w_{j+1})-w_{j+1}\gamma(w_j),w_{j+1}-w_j\big)].$$ The last thing that we need to observe is the equality $$-\area(T_j)=w_{j}\gamma(w_{j+1})-w_{j+1}\gamma(w_j).$$ The statement of Proposition \ref{descr_P} for the set $A$ such that $0\in\conv(A)$ follows immediately. The endpoints of the edges $E_j$ placed in the right order in the polygone $P$ form a sequence of vertices, and thus encode a sequence of paths in $\Delta.$ The projections of these paths onto $N\subset\Delta$ cut out triangles $T_j$ one by one, as described in Proposition \ref{descr_P}.

Now, let $A\subset \Z$ be a set such that $0\notin\conv(A).$ Without loss of generality, we can assume that  
$A\subset \Z_{\geqslant 0},$ and thus, we have $0<w_0<w_k.$ This case might seem quite different from the one considered above. Indeed, all the paths encoding the points of the fiber polygon $P$ end up at the point $(0,1,0).$ Another difference is that there can be only one possible sequence $Z,$ which is $(0,1,2,\ldots,k-1).$ 

\begin{center}
	\begin{tikzpicture}[scale=0.3]
	\draw[thick, gray,->] (0,0)--(9,18);
	\draw[thick, gray, ->] (9,0)--(-7,4);
	\draw[thick, gray, ->] (1,0)--(1,12);
	\draw[thick] (8,16)--(4,17)--(-5,17)--(-3,12);
	\draw[thick,dashed] (-3,12)--(0,9)--(4,8);
	\draw[thick] (1,5)--(-3,12);
	\draw[thick] (1,5)--(4,8);
	\draw[thick] (1,5)--(-5,17);
	\draw[thick] (1,5)--(4,17);
	\draw[thick] (1,5)--(8,16);
	\draw[ultra thick, cyan] (8,16)--(4,8)--(1,5);
	\draw[ultra thick, orange] (8,16)--(1,5);
	\draw[ultra thick, teal] (4,17)--(1,5);
	\draw[ultra thick, violet] (4,17)--(-5,17)--(-3,12);
	\draw[ultra thick, violet, dashed] (-3,12)--(0,9)--(1,5);
	\filldraw[color=black, fill=red!70, thick](8,16) circle (0.2);
	\filldraw[color=black, fill=red!70, thick](7,14) circle (0.2);
	\filldraw[color=black, fill=red!70, thick](4,8) circle (0.2);
	\filldraw[color=black, fill=red!70, thick](5,10) circle (0.2);
	\filldraw[color=black, fill=black, thick](1,5) circle (0.2);
	\filldraw[color=black, fill=black, thick](1,2) circle (0.2);
	\filldraw[color=black, fill=black, thick](4,17) circle (0.15);
	\filldraw[color=black, fill=black, thick](-5,17) circle (0.15);
	\filldraw[color=black, fill=black, thick](-3,12) circle (0.15);
	\filldraw[color=black, fill=black, thick](0,9) circle (0.15);
	\node[right,scale=0.75] at (1.3,2.1) {$(0,0,0)$};
	\node[left,scale=0.75] at (1,5) {$(0,1,0)$};
	\node[right,scale=0.75] at (4,8) {$(w_0,0,0)$};
	\node[right,scale=0.75] at (5,10) {$(w_1,0,0)$};
	\node[right,scale=0.75] at (7,14) {$(w_2,0,0)$};
	\node[right,scale=0.75] at (8,16) {$(w_3,0,0)$};
	\node[above,scale=0.75] at (4,17) {$(w_3,0,\gamma(w_3))$};
	\node[above,scale=0.75] at (-5,17) {$(w_2,0,\gamma(w_2))$};
	\node[left,scale=0.75] at (-3,12) {$(w_1,0,\gamma(w_1))$};
	\node[right,gray,scale=0.75] at (9,18) {$e_1$};
	\node[above,gray,scale=0.75] at (-7,4) {$e_2$};
	\node[above,gray,scale=0.75] at (1,12) {$e_3$};
	\node[below] at (4,-1) {Figure 9. The polytope $\Delta$ for $0\notin\conv(A)$.};
	\end{tikzpicture}
\end{center}

Let us describe the paths in $\Delta$ corresponding to the vertices of the fiber polygon $P.$ The first $4$ of them are colored in Figure 9, and the projections of rest of them are shown in Figure 10 below.

\begin{itemize}
\item[--] the path along the edge $[(w_k,0,0),(0,1,0)]$ in $\Delta$ (the vertex $(0,w_k)$);
\item[--] the path from $(w_k,0,0)$ to $(0,1,0)$ along the edges $[(w_k,0,0),(w_0,0,0)]$ and $[(w_0,0,0),(0,1,0)]$ (the vertex $(0,w_0)$);
\item[--] the path from $(w_k,0,\gamma(w_k))$ to $(0,1,0)$ along the edge $[(w_k,0,\gamma(w_k)),(0,1,0)];$ 
\item[--] the path from $(w_k,0,\gamma(w_k))$ to $(0,1,0)$ following the consecutive edges of the form  $[\big(w_{j+1},0,\gamma(w_{j+1})\big),\big(w_j,0,\gamma(w_j)\big)]\subset N\subset\Delta$ until $(w_0,0,\gamma(w_0))$ and then going along the edge $[(w_0,0,\gamma(w_0)),(0,1,0)]$ (vertex $(\area R_0,w_0)$);
\item[--] paths $(w_k,0,\gamma(w_k))$ to $(0,1,0),$ following the consecutive edges of the form  $[\big(w_{j+1},0,\gamma(w_{j+1})\big),\big(w_j,0,\gamma(w_j)\big)]$ until $(w_i,0,\gamma(w_i)),~i>0.$
\end{itemize}
Figure 10 shows the sets $R_j$ and the projections of the paths in $\Delta$ associated with vertices that have a non-zero first coordinate. 
\begin{center}
\begin{tikzpicture}[scale=0.6]
\filldraw[color=white, fill=violet!20, thick](0,0)--(8,0)--(8,1)--(7,4)--(4,3)--(3,1)--(0,0);
\draw[thick] (3,0)--(3,1)--(4,3)--(7,4)--(8,1)--(8,0)--(3,0);
\draw[ultra thick, violet](8,1)--(7,4)--(4,3)--(3,1)--(0,0);
\draw[thick, gray, ->] (-1,0)--(9,0);
\draw[thick, gray, ->] (0,-1)--(0,4);
\filldraw[color=black, fill=red!70, thick](3,0) circle (0.15);
\filldraw[color=black, fill=red!70, thick](4,0) circle (0.15);
\filldraw[color=black, fill=red!70, thick](7,0) circle (0.15);
\filldraw[color=black, fill=red!70, thick](8,0) circle (0.15);
\filldraw[color=black, fill=black, thick](3,1) circle (0.1);
\filldraw[color=black, fill=black, thick](4,3) circle (0.1);
\filldraw[color=black, fill=black, thick](7,4) circle (0.1);
\filldraw[color=black, fill=black, thick](8,1) circle (0.1);
\filldraw[color=black, fill=black, thick](0,0) circle (0.1);
\node[below right, scale=0.8] at (0,0) {$0$};
\node[below, scale=0.8] at (3,0) {$w_0$};
\node[below, scale=0.8] at (4,0) {$w_1$};
\node[below, scale=0.8] at (7,0) {$w_2$};
\node[below, scale=0.8] at (8,0) {$w_3$};
\node[below,scale=0.8] at (5,-0.5) {$R_0$};
\node[above,gray,scale=0.8] at (9,0) {$e_1$};
\node[left,gray, scale=0.8] at (0,4) {$e_3$};
\filldraw[color=white, fill=blue!20, thick](11,0)--(19,0)--(19,1)--(18,4)--(15,3)--(11,0);
\filldraw[color=white, fill=blue!30, thick](11,0)--(14,1)--(15,3);
\draw[thick] (14,0)--(14,1)--(15,3)--(18,4)--(19,1)--(19,0)--(14,0);
\draw[thick](19,1)--(18,4)--(15,3)--(14,1)--(11,0);
\draw[ultra thick,blue](19,1)--(18,4)--(15,3)--(11,0);
\draw[thick, gray, ->] (10,0)--(20,0);
\draw[thick, gray, ->] (11,-1)--(11,4);
\filldraw[color=black, fill=red!70, thick](14,0) circle (0.15);
\filldraw[color=black, fill=red!70, thick](15,0) circle (0.15);
\filldraw[color=black, fill=red!70, thick](18,0) circle (0.15);
\filldraw[color=black, fill=red!70, thick](19,0) circle (0.15);
\filldraw[color=black, fill=black, thick](14,1) circle (0.1);
\filldraw[color=black, fill=black, thick](15,3) circle (0.1);
\filldraw[color=black, fill=black, thick](18,4) circle (0.1);
\filldraw[color=black, fill=black, thick](19,1) circle (0.1);
\filldraw[color=black, fill=black, thick](11,0) circle (0.1);
\node[below right, scale=0.8] at (11,0) {$0$};
\node[below, scale=0.8] at (14,0) {$w_0$};
\node[below, scale=0.8] at (15,0) {$w_1$};
\node[below, scale=0.8] at (18,0) {$w_2$};
\node[below, scale=0.8] at (19,0) {$w_3$};
\node[below,scale=0.8] at (16,-0.5) {$R_1$};
\node[above right, scale=0.8] at (13,0.75) {$T_0$};
\node[above,gray,scale=0.8] at (20,0) {$e_1$};
\node[left,gray, scale=0.8] at (11,4) {$e_3$};
\end{tikzpicture}
\begin{tikzpicture}[scale=0.6]
\filldraw[color=white, fill=magenta!30, thick](0,0)--(8,0)--(8,1)--(7,4)--(0,0);
\pattern[pattern=north east lines, pattern color=magenta] (0,0)--(7,4)--(4,3)--(0,0);
%\filldraw[color=white, fill=magenta!50, thick](0,0)--(7,4)--(4,3)--(0,0);
\draw[thick] (3,0)--(3,1)--(4,3)--(7,4)--(8,1)--(8,0)--(3,0);
\draw[thick](8,1)--(7,4)--(4,3)--(3,1)--(0,0);
\draw[thick](8,1)--(7,4)--(4,3)--(0,0);
\draw[thick, gray, ->] (-1,0)--(9,0);
\draw[thick, gray, ->] (0,-1)--(0,4);
\draw[ultra thick, magenta](8,1)--(7,4)--(0,0);
\filldraw[color=black, fill=red!70, thick](3,0) circle (0.15);
\filldraw[color=black, fill=red!70, thick](4,0) circle (0.15);
\filldraw[color=black, fill=red!70, thick](7,0) circle (0.15);
\filldraw[color=black, fill=red!70, thick](8,0) circle (0.15);
\filldraw[color=black, fill=black, thick](3,1) circle (0.1);
\filldraw[color=black, fill=black, thick](4,3) circle (0.1);
\filldraw[color=black, fill=black, thick](7,4) circle (0.1);
\filldraw[color=black, fill=black, thick](8,1) circle (0.1);
\filldraw[color=black, fill=black, thick](0,0) circle (0.1);
\node[left, scale=0.8] at (3.5,2.8) {$T_1$};
\node[below right, scale=0.8] at (0,0) {$0$};
\node[below, scale=0.8] at (3,0) {$w_0$};
\node[below, scale=0.8] at (4,0) {$w_1$};
\node[below, scale=0.8] at (7,0) {$w_2$};
\node[below, scale=0.8] at (8,0) {$w_3$};
\node[above,gray,scale=0.8] at (9,0) {$e_1$};
\node[left,gray, scale=0.8] at (0,4) {$e_3$};
\node[below,scale=0.8] at (5,-0.5) {$R_2$};
\pattern[pattern=north east lines, pattern color=teal] (11,0)--(19,1)--(18,4)--(11,0);
\filldraw[color=white, fill=teal!20, thick](11,0)--(19,0)--(19,1)--(11,0);
\draw[thick] (14,0)--(14,1)--(15,3)--(18,4)--(19,1)--(19,0)--(14,0);
\draw[thick](19,1)--(18,4)--(15,3)--(14,1)--(11,0);
\draw[thick](19,1)--(18,4)--(15,3)--(11,0);
\draw[thick](19,1)--(18,4)--(11,0);
\draw[thick, gray, ->] (10,0)--(20,0);
\draw[thick, gray, ->] (11,-1)--(11,4);
\draw[ultra thick, teal](19,1)--(11,0);
\filldraw[color=black, fill=red!70, thick](14,0) circle (0.15);
\filldraw[color=black, fill=red!70, thick](15,0) circle (0.15);
\filldraw[color=black, fill=red!70, thick](18,0) circle (0.15);
\filldraw[color=black, fill=red!70, thick](19,0) circle (0.15);
\filldraw[color=black, fill=black, thick](14,1) circle (0.1);
\filldraw[color=black, fill=black, thick](15,3) circle (0.1);
\filldraw[color=black, fill=black, thick](18,4) circle (0.1);
\filldraw[color=black, fill=black, thick](19,1) circle (0.1);
\filldraw[color=black, fill=black, thick](11,0) circle (0.1);
\node[above, scale=0.8] at (19.3,1.5) {$T_2$};
\node[below right, scale=0.8] at (11,0) {$0$};
\node[below, scale=0.8] at (14,0) {$w_0$};
\node[below, scale=0.8] at (15,0) {$w_1$};
\node[below, scale=0.8] at (18,0) {$w_2$};
\node[below, scale=0.8] at (19,0) {$w_3$};
\node[below,scale=0.8] at (16,-0.5) {$R_3$};
\node[above,gray,scale=0.8] at (20,0) {$e_1$};
\node[left,gray, scale=0.8] at (11,4) {$e_3$};
\node[below] at (10,-2) {Figure 10. The sets $R_j$ and the projections of the corresponding paths in $\Delta.$};
\end{tikzpicture}
\end{center} 
As shown in Figure 10, the triangle $T_j$ might lie outside the set $R_{j-1}$, and $R_j$ is then $R_{j-1}\cup T_j$. However, this case is covered in the construction, since we use the symmetric difference of sets to define $R_j.$ 
\end{proof}

\begin{lemma}\label{VolP}
In the same notation as above, we have the following equality:
\begin{multline}
\vol(\int_{\pi}\Delta)=\sum\limits_{j=1}^{k} S_{z_j}\Big(d_{z_j}+\sum\limits_{l=1}^{j-1}2d_{z_l}\Big)+\\+
(|w_0|-w_0)(w_k-w_0)\gamma(w_0)+(w_k+|w_k|)(w_k-w_0)\gamma(w_k),
\end{multline}
where $d_i=w_{i+1}-w_i$ and $S_i=w_{i+1}\gamma(w_i)-w_i\gamma(w_{i+1}).$
\end{lemma}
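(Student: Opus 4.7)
The plan is to exploit the explicit trapezoidal decomposition of $P$ given by Proposition \ref{descr_P}. Since a rectangular lattice trapezoid with parallel sides of lattice lengths $a$ and $b$ separated by lattice distance $h$ has lattice area $(a+b)h$, stacking the $k$ slabs produces immediately
\[
\vol(P)=\sum_{i=1}^{k}\bigl(\area(R_{i-1})+\area(R_i)\bigr)\,d_{z_i}.
\]

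The second step is to unwind the recursion $R_i=R_{i-1}\ominus T_{z_i}$ into the closed form
\[
\area(R_i)=\area(R_0)-\sum_{l=1}^{i}S_{z_l},
\]
where $S_j=w_{j+1}\gamma(w_j)-w_j\gamma(w_{j+1})$. As already observed in the proof of Proposition \ref{descr_P}, the triangle $T_{z_i}$ may lie either inside or outside $R_{i-1}$, so the symmetric difference acts as set subtraction in one case and as set union in the other. The key point is that the sign of $S_{z_i}$ records precisely the orientation of $T_{z_i}$ relative to the upper boundary of $R_{i-1}$, so that the signed increment $\area(R_i)-\area(R_{i-1})$ equals $-S_{z_i}$ in both scenarios. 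Checking this sign compatibility is the main geometric obstacle of the proof.

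Third, a direct shoelace computation on the definition of $R_0$ from Subsection 3.2 yields
\[
\area(R_0)=\sum_{j=0}^{k-1}S_j+\tfrac{|w_0|-w_0}{2}\gamma(w_0)+\tfrac{w_k+|w_k|}{2}\gamma(w_k),
\]
where at most one of the two boundary correction terms is nonzero, according to whether the extra boundary triangle in the definition of $R_0$ is attached at $w_0$ (when $|w_0|<|w_k|$) or at $w_k$ (when $|w_0|>|w_k|$).

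Finally, substituting these two identities into the trapezoidal sum and using $\sum_{i=1}^{k}d_{z_i}=w_k-w_0$, one arrives at
\[
\vol(P)=2(w_k-w_0)\area(R_0)-\sum_{i=1}^{k}S_{z_i}d_{z_i}-2\sum_{i=1}^{k}\sum_{l=1}^{i-1}S_{z_l}d_{z_i}.
\]
Symmetrizing the double sum via $\sum_i\sum_l S_{z_l}d_{z_i}=\bigl(\sum_l S_l\bigr)(w_k-w_0)$, the cross terms rearrange into the desired shape $\sum_{j=1}^{k}S_{z_j}\bigl(d_{z_j}+2\sum_{l=1}^{j-1}d_{z_l}\bigr)$, while the contribution of the boundary correction in $\area(R_0)$ yields precisely $(|w_0|-w_0)(w_k-w_0)\gamma(w_0)+(w_k+|w_k|)(w_k-w_0)\gamma(w_k)$. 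Once the sign analysis in the second step is in place, the remainder is trapezoid area accounting and elementary algebraic bookkeeping.
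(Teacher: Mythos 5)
Your proof takes the same route as the paper: the paper's own argument is exactly the trapezoid decomposition of $P$ from Proposition \ref{descr_P}, so in outline the two coincide. All four of your steps are sound, and I checked the algebra: from $\vol(P)=\sum_{i=1}^k(\area R_{i-1}+\area R_i)d_{z_i}$, $\area R_i=\area R_0-\sum_{l\le i}S_{z_l}$, and $\area R_0=\sum_j S_j+\tfrac{|w_0|-w_0}{2}\gamma(w_0)+\tfrac{w_k+|w_k|}{2}\gamma(w_k)$, a routine reindexing of the double sum indeed reproduces formula (\ref{main_formula}). Your insistence on verifying the sign $\area R_i-\area R_{i-1}=-S_{z_i}$ in both the ``inside'' and ``outside'' scenario is well placed; a cleaner way to see it is that the edge $E_{z_i}$ of $P$ has horizontal displacement $-S_{z_i}$, so $\length(\Sigma_i)-\length(\Sigma_{i-1})=-S_{z_i}$ follows directly from Proposition \ref{descr_P}.

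One side remark is wrong, though it happens not to affect your final computation: you claim that at most one of the two boundary corrections $\tfrac{|w_0|-w_0}{2}\gamma(w_0)$ and $\tfrac{w_k+|w_k|}{2}\gamma(w_k)$ is nonzero, tied to which auxiliary triangle is attached to $N$. When $0\in\conv(A)$ both coefficients are nonzero (in that case the attached triangle is contained in $N$ and $R_0=N$, whose shoelace area is $w_k\gamma(w_k)-w_0\gamma(w_0)+\sum_j S_j$, so both terms survive). Since the displayed formula for $\area R_0$ is correct in all three cases, the error does not propagate; but you should strike the parenthetical, or replace it by the observation that the corrections coincide with $w_k\gamma(w_k)-w_0\gamma(w_0)$ when $0\in\conv(A)$ and otherwise reduce to exactly one of $w_k\gamma(w_k)$ or $-w_0\gamma(w_0)$.
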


\begin{proof}
By Proposition \ref{descr_P}, the fiber polytope $P=\int_{\pi}\Delta$ is a union of $k$ rectangular trapezoids sitting on top of each other. The lengths of the bases as well as the heights of the trapezoids are already computed. Using the well-known formula for the area of a trapezoid and computing the sum of areas of all the $k$ trapezoids, we obtain the desired formula.
\end{proof}

The following formula is a straightforward corollary of Lemma \ref{VolP}. 

\begin{sledst}
In the same notation as above, for a finite set $A\subset \Z_{>0}$ and a covector $\gamma\in(\R^{|A|})^*$ with the corresponding subdivision $W\subset A,$ we have the following formula:
\begin{multline}\label{volP_poly}
\vol(\int_{\pi}\Delta)=w_1(w_1-w_0)\gamma(w_0)+(w_k-w_{k-1})(2w_k+w_{k-1}-2w_0)\gamma(w_k)+\\+\sum\limits_{j=1}^{j=k-1}(w_{j+1}-w_{j-1})(w_{j-1}+w_j+w_{j+1}-2w_0)\gamma(w_j).
\end{multline}
\end{sledst}

\begin{exa}
	For $A=[1,n]\cap\Z$ and a concave function $\gamma\colon A\to\Z_{\geqslant 0}$, we have 
	$$\vol(\int_{\pi}\Delta)=2\gamma(1)+(3n-3)\gamma(n)+\sum_{m=2}^{n-1} 2(3m-2)\gamma(m).$$
\end{exa}

\subsection{Step 5: the singularities at infinity}\label{sub_FPS}
In this subsection we compute the other two summands in the right-hand side of (\ref{rel3}). 
Without loss of generality, we can assume that the polytope $\Delta$ satisfies the following condition.

\begin{predpol}\label{primitive}
	For every facet of $\Delta$ the image of its primitive normal covector under the projection forgetting the first coordinate is also primitive.
\end{predpol}

\begin{rem}\label{prim_expl1}
	One can always achieve the condition from Assumption \ref{primitive} using the change of variables $\check{x}=x,~\check{y}^{M!}=y,~\check{t}^{M!}=t$ for $M$ big enough. Moreover, under this change of variables, both sides of the equation (\ref{rel3}) are multiplied by $(M!)^2.$ The latter means that the formulas for the contributions of singular points at infinity, which we are going to obtain in this subsection, will work independently of Assumption \ref{primitive}.
\end{rem}

First we introduce a bit of notation. Let $\tilde{A}_{\gamma}\subset\Z^3$ be the support of the polynomial $f_t(x)=\sum\limits_{p\in A}(q_p+v_pt^{\gamma(p)})x^p.$ By $\Gamma_j,~0\leqslant j\leqslant k-1,$ we denote the facets of the form $\conv(\{(0,1,0),(w_j,0,\gamma(w_j)),(w_{j+1},0,\gamma(w_{j+1}))\})\subset\Delta.$ 

Each facet $\Gamma_j$ is contained in the hyperplane $H_j$ given by an equation of the form $h_j(e_1,e_2,e_3)=\lambda_j$. The statement below can be easily shown by a straightforward computation. 
\begin{utver}
	In the same notation as above, the facet $\Gamma_j$ lies in a hyperplane $H_j$ defined by the following equation:
	\begin{multline}\label{h_j}
	\big(\gamma(w_j)-\gamma(w_{j+1})\big)e_1+\big(w_{j+1}\gamma(w_j)-w_j\gamma(w_{j+1})\big)e_2+\big(w_{j+1}-w_j\big)e_3=\\=\big(w_{j+1}\gamma(w_j)-w_j\gamma(w_{j+1})\big).
	\end{multline}
\end{utver}

\begin{rem}
	In the notation of Section \ref{answer}, one can rewrite the equality (\ref{h_j}) as follows: 
	$$(w_{j+1}-w_j)r_j\cdot e_1+(w_{j+1}-w_j)\varphi_{\gamma}(r_j)\cdot e_2+(w_{j+1}-w_j)e_3=(w_{j+1}-w_j)\varphi_{\gamma}(r_j).$$
\end{rem}

We now construct a sequence of sets $B^j$ and the corresponding sequence of integers  $i^{j}=(i_1^{j},i_2^{j},\ldots)$ as follows. Set $B_1^{j}=\tilde{A}_{\gamma}\cap\Gamma_j.$ For every $l>1,$ we define  $$B_l^{j}=B_{l-1}^{j}\cup(\tilde{A}_{\gamma}\cap\{h_j(e_1,e_2,e_3)=\lambda_j-(l-1)\}),$$ depending on the way $\Delta$ is positioned relative to the hyperplane containing $\Gamma_j.$ 
Finally, for every $l\geqslant 1,$ we set $$i_l^{j}=\ind_{v}(B_l^{j}).$$

It is clear that for every $r,$ the element $i_r^{j}$ divides $i_{r-1}^{j}.$ Moreover, since the set $A$ affinely generates the lattice $\Z,$ any such sequence stabilizes to $1$.

We are interested in the singularities of the closure of the curve $D$ in the toric surface $X_P,$ where $P=\int_{\pi}\Delta.$ The following result describes the singularities on the orbits of $X_P,$ corresponding to the Minkowski integrals of facets $\Gamma_j.$ 

\begin{utver}
	In the same notation as above, the singularities of $\overline{D}\subset X_P$ on the orbits of $X_P$ corresponding to the Minkowski integrals of facets $\Gamma_j$ are $\dfrac{\vol{\Gamma_j}}{i_1^j}$ copies of the  $i^{j}$--forking paths singularity. 
\end{utver}
\begin{proof}
Using a suitable change of variables (which exists due to Assumption \ref{primitive}), one can reduce the computation for the given facet $\Gamma_j$ exactly to the case considered in Example \ref{fpsgen}. The desired statement follows immediately. 
\end{proof}

We are looking for the sum $\sum_{s\in \mathrm{FPS}}\chi([\text{Milnor fiber of~} s]\cap\CC^2).$ Denote by $C^{j}_{\gamma}$ the contribution of the facets $\Gamma_j$ to this sum. The following result follows directly from the formula for the Euler characteristic of the Milnor fiber of a forking--path singularity given in Proposition \ref{chifps}. 
\begin{utver}
	The following equality holds: 
	\begin{equation} \label{fpsc_def}
	-C^{j}_{\gamma}=\vol(\Gamma_j)\sum\limits_1^{\infty}(i_l^{j}-1).
	\end{equation}
\end{utver}

Let us compute $C^j_{\gamma}$ in terms of the support set $\tilde{A}.$ Suppose that $(m,0,\gamma(m))$ is the first point ``encountered'' while shifting the plane $L_j.$ Then a straightforward computation implies that the distance from this point to the plane $L_j$ is equal to 

\begin{equation}\label{dist}
\ell_m=\dfrac{1}{\vol(\Gamma_j)}\Bigg((w_{j+1}-m)\gamma(w_j)+(m-w_{j})\gamma(w_{j+1})-(w_{j+1}-w_j)\gamma(m)\Bigg).
\end{equation}

Therefore, the first $\ell_m$ elements of the sequence $i^{j}$ are equal to $\gcd(w_{j+1},w_j),$ and the element number $\ell_{m}+1$ is equal to $\gcd(w_j,m,w_{j+1}).$ 

Suppose that we know the order in which the points $(m,0,\gamma(m)),~m\neq w_j,w_{j+1},$ occur in the sequence $B_l^{j}.$ It can be encoded by a sequence $M^j=(m^j_l),~1\leqslant l\leqslant |A|-2$ of the integer numbers in $A\setminus\{w_j,w_{j+1}\}.$

Recall that in Section \ref{answer} the sequence $M^j$ was defined in a different way, as the sequence of elements in $A\setminus\{w_j,w_{j+1}\}$ placed in decreasing order with respect to the values of the monomials $\gamma(p)\odot X^{\odot p}$ attained at the root $r_j$ of the tropical polynomial $\varphi_{\gamma}(X).$ 
\begin{utver}
	The two ways of defining the sequence $M^j$ are equivalent. 
\end{utver}

\begin{proof}
	The point $p\in A$ occurs in the sequence $M^j$ earlier than $m\in A$ if the distance $\ell_p$ between the plane $L_j$ and the point $(p,0,\gamma(p))$ is smaller than the distance $\ell_m$ between $L_j$ and $(m,0,\gamma(m)).$ Using (\ref{dist}), the latter can be rewritten as follows: 
	\begin{multline*}
	(w_{j+1}-m)\gamma(w_j)+(m-w_{j})\gamma(w_{j+1})-(w_{j+1}-w_j)\gamma(m)>\\>(w_{j+1}-p)\gamma(w_j)+(p-w_{j})\gamma(w_{j+1})-(w_{j+1}-w_j)\gamma(p),
	\end{multline*}
	or, equivalently, 
	\begin{equation}
	m\dfrac{\gamma(w_j)-\gamma(w_{j+1})}{w_{j+1}-w_j}+\gamma(m)<p\dfrac{\gamma(w_j)-\gamma(w_{j+1})}{w_{j+1}-w_j}+\gamma(p),
	\end{equation}
	which concludes the proof, since $r_j=\dfrac{\gamma(w_j)-\gamma(w_{j+1})}{w_{j+1}-w_j}.$
\end{proof}

Recall that by $b^j$ we denoted the finite sequence of numbers defined recursively as follows: $b^j_0=\gcd(w_j,w_{j+1})$ and $b^j_l=\gcd(b^j_{l-1},m_l)$ for $l>0.$ Note that since the sequence $i^{j}$ stabilizes to $1,$ then so does the sequence $b^j.$ 

\begin{utver}\label{fps_C}
	In the same notation as above, the number $C^{j}_{\gamma}$ can be computed via the following formula: 
	\begin{equation}\label{fps_contrib2}
	C^j_{\gamma}=\sum\limits_{l\geqslant 1}\Big((w_{j+1}-w_j)\gamma(m_l^j))+(m_l^j-w_{j+1})\gamma(w_j)+(w_j-m_l^j)\gamma(w_{j+1})\Big)(b_{l-1}^j-b_l^j).
	\end{equation}
\end{utver}

\begin{proof}
	To prove this statement, we can write down the expression for $C^j_{\gamma}$ using (\ref{fpsc_def}), expand it and eliminate the terms with opposite signs. 
\end{proof}

The following statement is a straightforward corollary of Proposition \ref{fps_C}. 

\begin{sledst}
	In the same notation as above, 
	\begin{itemize}
		\item [i.] If $\gcd(w_j,w_{j+1})=1,$ then $C^j_{\gamma}=0;$
		\item [ii.] The terms enumerated by the indices $l$ such that $b^j_{l-1}=b^j_{l}$ do not contribute to the sum (\ref{fps_contrib2});
		\item [ii.] Let $m_R$ be the first term of the sequence $M^j$ such that $b^j_R=1,$ then for any $l>R,$ the terms enumerated by $l$ do not contribute to the sum (\ref{fps_contrib2}).
	\end{itemize}
\end{sledst}

\begin{exa}
	If $M^j$ is such that $b^j$ stabilizes at $1$ after the first element $m_1^j=m,$ then we have $$C^j_{\gamma}=\Big((m-w_{j+1})\gamma(w_j)+(w_j-m)\gamma(w_{j+1})+(w_{j+1}-w_j)\gamma(m)\Big)(\gcd(w_j,w_{j+1})-1).$$
\end{exa}

The polytope $\Delta$ has $4$ other facets, whose contribution is yet to be studied: 
\begin{itemize}
	\item[--] The facet $\conv(\{(0,1,0),(w_0,0,0),(w_k,0,0)\});$
	\item[--] The base $N\subset\Delta$ of the pyramid $\Delta;$
	\item[--] The facet $Q_1=\conv(\{(0,1,0),(w_0,0,0),(w_0,0,\gamma(w_0))\});$
	\item[--] The facet $Q_2=\conv(\{(0,1,0),(w_k,0,0),(w_k,0,\gamma(w_k))\}).$
\end{itemize}

The first of the facets does not contribute to the Euler characteristic of the curve $D.$ Indeed, on one hand, the closure of the curve $\mathcal C$ in the toric variety $X_{\Delta}$ intersects the corresponding orbit transversally due to genericity of the coefficients $q_p,$ so, the contribution of this facet can be computed using the same method as for the facets $\Gamma_j$ (see Section \ref{proj_sing} for details). At the same time, the intersection with $\tilde{A}_{\gamma}$ contains all the points $(p,0,0),~p\in A,$ and since $A$ affinely generates the lattice $\Z,$ which automatically makes the sought contribution equal to $0.$ 

One can easily show that the truncated system $\{f(x,y,t)^N=x\dfrac{\partial f(x,y,t)}{\partial x}=0\}$ has exactly  $\area(N)-\gamma(w_0)(w_1-w_0)-\gamma(w_k)(w_k-w_{k-1})$ roots in $\CC^2,$ and for generic coefficients $q_p$ and $v_p$ their second coordinates of those roots are all pairwise distinct and different from the roots for the univariate polynomials $q_{w_0}+v_{w_0}t^{\gamma(w_0)}$ and $q_{w_k}+v_{w_k}t^{\gamma(w_k)}.$ Thus, they do not contribute to the singularities of the curve $D$ at infinity. Therefore, the only contributions that we still need to deal with are the ones coming from the facets $Q_1$ and $Q_2.$

In fact, instead of computing these contributions, we will show that they only depend on the values  $\gamma(a_0),\gamma(a_{|A|-1})$ and the distances $a_1-a_0$ and $a_{|A|-1}-a_{|A|-2},$ but not on the subdivision $W\subset A.$ The latter means that taking these contributions into accout while computing the support function of the polytope $\mathcal M_A$ would only shift the polytope and not change it in any other way. 

Note that the closure of the complete intersection $\mathcal C$ does not intersect the $2$--dimensional orbit of the toric variety $X_{\Delta}$ corresponding to the facet $Q_1,$ but it does intersect the $1$--dimensional orbit corresponding to the horizontal edge $[(w_0,0,0),(w_0,0,\gamma(w_0))].$ To compute the contribution of the corresponding intersection points, we will use an auxiliary map, namely, the projection $\tilde{\pi}\colon\CC^3\to\CC^2$ forgetting the third coordinate. 
The monomial change of variables $x=\check x,~t=\check t,~y=\check{x}^{w_0} \check{y}$ makes the facet $Q_1$ vertical with respect to this projection, and reduces our case to the following one (see Figure 11 below). 
\begin{center}
\begin{tikzpicture}[scale=0.7]
\draw[thick, gray,->] (0,0)--(-4,-2);
\draw[thick, gray,->] (0,0)--(0,5);
\draw[thick, gray,->] (0,0)--(8,0);
\draw[ultra thick] (0,3)--(2,5)--(5,5)--(7,1)--(7,0);
\draw[ultra thick, dashed] (0,0)--(7,0);
\draw[ultra thick, dashed] (0,0)--(0,3);
\draw[ultra thick] (-2,-1)--(0,3);
\draw[ultra thick] (-2,-1)--(2,5);
\draw[ultra thick] (-2,-1)--(5,5);
\draw[ultra thick] (-2,-1)--(7,1);
\draw[ultra thick] (-2,-1)--(7,0);
\draw[ultra thick, dashed] (-2,-1)--(0,0);
\filldraw[color=black, fill=white, thick](4,0) circle (0.15);
\filldraw[color=black, fill=red!70, thick](0,0) circle (0.15);
\filldraw[color=black, fill=red!70, thick](2,0) circle (0.15);
\filldraw[color=black, fill=red!70, thick](5,0) circle (0.15);
\filldraw[color=black, fill=red!70, thick](7,0) circle (0.15);
\filldraw[color=black, fill=black, thick](0,3) circle (0.15);
\filldraw[color=black, fill=black, thick](2,5) circle (0.15);
\filldraw[color=black, fill=black, thick](5,5) circle (0.15);
\filldraw[color=black, fill=black, thick](7,1) circle (0.15);
\filldraw[color=black, fill=black, thick](-2,-1) circle (0.15);
\node[above,gray] at (8,0) {$e_1$};
\node[left,gray] at (0,5) {$e_3$};
\node[above,gray] at (-4,-2) {$e_2$};
\node[below] at (4,-2.5) {Figure 11. The polytope $\Delta$ after the change of coordinates.};
\end{tikzpicture}
\end{center} 

Let $t_0$ be one of the roots for the polynomial $q_{a_0}+v_{a_0}t^{\gamma(a_0)}.$ The hypersurface $\mathcal H=\{\tilde{f}(\check x,\check y,\check t)=0\}$ intersects the orbit corresponding to the edge $[(0,0,0),(0,0,\gamma(a_0))]$ transversally at the point $(0,0,t_0).$ Therefore, near the point $(0,0,t_0),$ this hypersurface is a graph of a smooth function $\varphi(\check x,\check y),$ moreover, the restriction of the projection $\tilde{\pi}$ to $\mathcal H$ near $(0,0,t_0)$ is a diffeomorphism. Due to this obervation, it suffices to consider the projection $\tilde{\pi}\mathcal C$ near $(0,0)$ instead of the curve $\mathcal C$ near the point $(0,0,t_0)$ itself. The second polynomial of the system defining the curve $\mathcal C$ is $\tilde{g}(\check x,\check y,\check t)=\sum\limits_{p\in A} p(q_p+v_p\check t^{\gamma(p)})\check x^{p-a_0}.$ Let us substitute $\varphi(\check x,\check y)$ instead of $\check t.$ Then we obtain $\tilde{g}(\check x,\check y,\varphi(\check x,\check y))= \sum\limits_{p\in A} p(q_p+v_p\varphi(\check x,\check y)^{\gamma(p)})\check x^{p-a_0}.$ Finally, note that we have the equality $$\tilde{f}(\check x,\check y,\varphi(\check x, \check y))=\sum\limits_{p\in A} (q_p+v_p\varphi(\check x,\check y)^{\gamma(p)})\check x^{p-a_0}-\check y=0.$$

Thus, the projection $\tilde{\pi}(\mathcal C)$ near $(0,0)$ is given by the formula
$$\tilde{g}(\check x,\check y,\varphi(\check x,\check y))-a_0\cdot\tilde{f}(\check x,\check y,\varphi(\check x,\check y))=0,$$

or, equivalently, we have

$$a_0\cdot \check y+\sum\limits_{p\in A} (p-a_0)(q_p+v_p\varphi(\check x,\check y)^{\gamma(p)})\check x^{p-a_0}=0.$$

Finally, note that the Taylor series of $\varphi(\check x,\check y)$ near $(0,0,t_0)$ can be written as follows: $\varphi(\check x,\check y)=t_0+\alpha \check x+\beta \check y+ h.o.t.$ So, the only thing that we still need to do is to substitute this series into the defining equation computed above and find the vertices of the corresponding Newton diagram. Thus we obtain 
\begin{multline*}
a_0\cdot \check y+\sum\limits_{p\in A} (p-a_0)(q_p+v_p(t_0+\alpha \check x+\beta \check y+ h.o.t.)^{\gamma(p)}\check x^{p-a_0})=\\=a_0\cdot \check y+(a_1-a_0)(q_{a_1}+v_{a_1}t_0^{\gamma(a_1)})\check x^{a_1-a_0}+h.o.t.
\end{multline*}

Note that since $0\notin A,$ the coefficient at $y$ is non-zero. Moreover for a generic choice of numbers $q_p$ and $v_p$ the sets of roots for the polynomials $q_{a_0}+v_{a_0}t^{\gamma(a_0)}$ and 
$q_{a_1}+v_{a_1}t^{\gamma(a_1)}$ do not intersect, thus, the coefficient at $x^{a_1-a_0}$ is non-zero as well. Thus, we have shown that the contribution of the edge $Q_1$ does not depend on the subdivision $W,$ but only on $\gamma(a_0)=\length(Q_1)$ and the distance $a_1-a_0.$ Similarly, the contribution of the edge $Q_2$ depends on $\gamma(a_{|A|-1})$ and the distance $a_{|A|-1}-a_{|A|-2}.$ 

\subsection{Step 6: computing the support function of the Morse polytope}\label{sol_eq}

Combining all the results obtained in the subsections above, we obtain the following system of equations. 
\begin{equation}\label{e123}
\begin{cases}
\chi(\mathcal A_1)+2|2\mathcal A_1|+2|\mathcal A_2|=-\area(N)=w_0\gamma(w_0)-w_k\gamma(w_k)-\sum\limits_{j=0}^{k-1} S_j\\
|\mathcal A_2|=\area(N)-\gamma(w_0)-\gamma(w_k)=\sum\limits_{j=0}^{k-1} S_j+(w_k-1)\gamma(w_k)-(w_0+1)\gamma(w_0)\\
\chi(\mathcal A_1)-|\mathcal A_2|=-\sum\limits_{j=1}^{k} S_{z_j}\Big(d_{z_j}-\sum\limits_{l=1}^{j-1}2d_{z_l}\Big)-
(|w_0|-w_0)(w_k-w_0)\gamma(w_0)\\\hspace{82pt}-(w_k+|w_k|)(w_k-w_0)\gamma(w_k)-c_1\gamma(w_0)-c_2\gamma(w_k)-\sum\limits_{j=0}^{j=k-1}C^j_{\gamma},
\end{cases}
\end{equation} 
where $d_i=w_{i+1}-w_i,~S_i=w_{i+1}\gamma(w_i)-w_i\gamma(w_{i+1}),$ and $c_1,c_2$ are some constants not depending on $\gamma.$

The sought number $2|2\mathcal A_1|+|\mathcal A_2|$ can be easily extracted from the system (\ref{e123}).

Thus we obtain the following result. 

\begin{lemma}
	In the same notation as above, we have the following equality:	
	\begin{multline}\label{sol_eq123}
	2|2\mathcal A_1|+|\mathcal A_2|=\sum\limits_{j=1}^{k} S_{z_j}\Big(d_{z_j}-3+\sum\limits_{l=1}^{j-1}2d_{z_l}\Big)+\sum\limits_{j=0}^{j=k-1}C^j_{\gamma}\\+
	\big((|w_0|-w_0)(w_k-w_0)+3w_0+c_1+2\big)\gamma(w_0)\\+\big((w_k+|w_k|)(w_k-w_0)-3w_k+c_2+2\big)\gamma(w_k),
	\end{multline}
	where $d_i=w_{i+1}-w_i$ and $S_i=w_{i+1}\gamma(w_i)-w_i\gamma(w_{i+1}).$
\end{lemma}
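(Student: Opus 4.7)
The plan is to treat (\ref{e123}) as a $3\times 3$ linear system in the unknowns $\chi(\mathcal A_1)$, $|2\mathcal A_1|$, and $|\mathcal A_2|$, and to read off the desired quantity $2|2\mathcal A_1|+|\mathcal A_2|$ by a single linear combination. By inspection, the combination $(\text{Eq.~1}) - (\text{Eq.~3}) - 2\cdot(\text{Eq.~2})$ eliminates $\chi(\mathcal A_1)$ (which occurs with coefficient $+1$ in both Eq.~1 and Eq.~3) and collapses the $|\mathcal A_2|$--coefficient to $2-(-1)-2=1$, leaving exactly $2|2\mathcal A_1|+|\mathcal A_2|$ on the left-hand side. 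Thus no further geometric input is required; the proof reduces to simplifying the right-hand side of this combination.

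For the simplification, I would first substitute the explicit formula $-\area(N)=w_0\gamma(w_0)-w_k\gamma(w_k)-\sum_{j=0}^{k-1}S_j$ provided by Eq.~1 and use that $(z_1,\ldots,z_k)$ is a permutation of $(0,1,\ldots,k-1)$, so $\sum_{j=0}^{k-1}S_j=\sum_{j=1}^{k}S_{z_j}$. The combination's right-hand side is $-3\,\area(N)-(\text{RHS of Eq.~3})+2\gamma(w_0)+2\gamma(w_k)$. The $-3\,\area(N)$ piece expands into $-3\sum_{j=1}^{k}S_{z_j}$ together with a boundary residue $3w_0\gamma(w_0)-3w_k\gamma(w_k)$; meanwhile $-(\text{RHS of Eq.~3})$ contributes (in the sign dictated by Lemma \ref{VolP}) the term $\sum_{j=1}^{k}S_{z_j}\bigl(d_{z_j}+\sum_{l<j}2d_{z_l}\bigr)$ along with the $\gamma(w_0)$, $\gamma(w_k)$, $c_1,c_2$ boundary terms and $\sum_{j=0}^{k-1}C^j_\gamma$.

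Merging these pieces, $-3\sum S_{z_j}$ combines with $\sum S_{z_j}(d_{z_j}+\sum_{l<j}2d_{z_l})$ into the sum $\sum_{j=1}^{k}S_{z_j}\bigl(d_{z_j}-3+\sum_{l<j}2d_{z_l}\bigr)$ appearing in (\ref{sol_eq123}). The boundary residue $3w_0\gamma(w_0)+2\gamma(w_0)$ augments $(|w_0|-w_0)(w_k-w_0)\gamma(w_0)+c_1\gamma(w_0)$ inherited from $-(\text{Eq.~3})$ to give the bracketed coefficient $(|w_0|-w_0)(w_k-w_0)+3w_0+c_1+2$, and the $\gamma(w_k)$--coefficient follows symmetrically (with a sign flip since the residue is $-3w_k\gamma(w_k)$). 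The forking-paths sum $\sum_{j=0}^{k-1}C^j_\gamma$ passes through unchanged. Since every step is a direct substitution, there is no genuine conceptual obstacle; the only hurdle is sign bookkeeping, in particular making sure the permutation identity $\sum_j S_j=\sum_j S_{z_j}$ is invoked precisely when the $-3\,\area(N)$ contribution needs to merge with the reindexed summand from Eq.~3, and keeping track of the asymmetric $+3w_0$ vs.\ $-3w_k$ that arises from the asymmetric form of $-\area(N)$.
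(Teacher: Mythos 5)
Your proposal is correct and is essentially the same route the paper takes: the paper simply declares that $2|2\mathcal A_1|+|\mathcal A_2|$ ``can be easily extracted from the system (\ref{e123})'', and your explicit linear combination $(\text{Eq.~1})-(\text{Eq.~3})-2(\text{Eq.~2})$ (which indeed zeroes $\chi(\mathcal A_1)$, leaves $2|2\mathcal A_1|$, and collapses the $|\mathcal A_2|$ coefficient to $2-(-1)-2=1$) together with the permutation identity $\sum_{j=0}^{k-1}S_j=\sum_{j=1}^{k}S_{z_j}$ is precisely the omitted bookkeeping. The boundary coefficients $3w_0+c_1+2$ and $-3w_k+c_2+2$ fall out exactly as you describe.
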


\begin{rem}
Note that since $c_1$ and $c_2$ are some constants that do not depend on $\gamma,$ we can slightly simplify (\ref{sol_eq123}) by replacing $3w_0+c_1+2$ and $-3w_k+c_2+2$ with $c_1$ and $c_2$ respectively. 
\end{rem}

Finally, the value of the sought support function $\mu_A$ at a generic integer covector ${\gamma\in(\R^{|A|})^*}$ is exactly the sum $2|2\mathcal A_1|+|\mathcal A_2|,$ which concludes the proof of Theorem \ref{support}.  

\begin{rem}\label{maxwell_supp}
From the same system of equations (\ref{sol_eq123}), one can extract the number $|2\mathcal A_1|,$ which yields the support function of the Maxwell stratum. In general, to compute the support function of $h_c^ah_m^b,$ where $h_c$ and $h_m$ are defining polynomials of the caustic and the Maxwell stratum respectively, we need to find the sum $b|2\mathcal A_1|+a|\mathcal A_2|.$
\end{rem} 
\newpage

{\bf Arina Voorhaar (Arkhipova)}  \href{mailto:arina.voorhaar@gmail.com}{\nolinkurl{arina.voorhaar@gmail.com}};\\

National Research University Higher School of Economics, Russian Federation,\\ Department of Mathematics, Usacheva ul. 6, Moscow, 119048; \href{mailto:aarhipova@hse.ru}{\nolinkurl{aarhipova@hse.ru}};\\ 

University of Geneva, Switzerland, Department of Mathematics, \\rue du Conseil-Général 7-9, Gen\`{e}ve, 1205; \href{mailto:Arina.Arkhipova@unige.ch}{\nolinkurl{Arina.Arkhipova@unige.ch}}.
\end{document}